\theoremstyle{definition}
\theoremstyle{plain}
\newtheorem{thm}{Theorem}[section]
\newtheorem{prop}[thm]{Proposition}
\newtheorem{lem}[thm]{Lemma}
\newtheorem{corol}[thm]{Corollary}
\newtheorem{obser}[thm]{Remark}
\def\prx{\partial_x}
\def\pry{\partial_y}
\newcommand{\ns}[2]{ \left\| #1 \right\|_{#2}}
\newcommand{\sgn}{\mathrm{sgn}}
\newcommand{\normp}[2]{\Vert#1\Vert_{#2}}
\newcommand{\SSh}{\mathcal{S}}
\newcommand{\re}{\mathbb{R}}
\newcommand{\R}{\mathbb{R}}
\newcommand{\nat}{\mathbb{N}}
\title{On the Cauchy problem of a two-dimesional Benjamin-Ono equation.}
\author{Germán Preciado López and F\'elix H. Soriano M\'endez}
\begin{document}\maketitle
\begin{abstract}
In this work we shall show that the Cauchy problem  
\begin{equation}
\left\{
\begin{aligned}
&(u_t+u^pu_x+\mathcal H\partial_x^2u+ \alpha\mathcal H\partial_y^2u )_x - \gamma u_{yy}=0 \quad p\in{\nat} \\
 &u(0;x,y)=\phi{(x,y)}
 \end{aligned}
 \right.
\end{equation} 
is locally well-posed in the Sobolev spaces $H^s({\re}^2)$, $X^s$ and
weighted spaces $X_s(w^2)$, for $s>2$.
\end{abstract}
\section{Introduction} 
The purpose of this work is to show that the Cauchy problem 
\begin{eqnarray}
  (u_t+u^pu_x+\mathcal H\partial_x^2u+ \alpha\mathcal H\partial_y^2u
  )_x  -\gamma u_{yy}=0 \label{BOeq},
\end{eqnarray}
is locally well-posed in the Sobolev spaces $H^s({\re}^2)$ and $X^s$,
and in the weighted spaces $X_s(w^2)$, for $s>2$ (see the Section
\ref{sec2} for the notations used here). We also prove global well-posedness
for small enough initial data and examine the asymptotic behaviour of
the solutions for these initial datas.\par It should be noted that the
equation \eqref{BOeq} is the model of dispersive long wave motion in a
weakly nonlinear two-fluid system, where the interface is subject to
capillarity and bottom fluid is infinitely deep (see \cite{ablow},
\cite{abloseg} and \cite{kim}). For this equation, with $\alpha=0$,
the local well-posedness was proven in \cite{guoboling}. Also, the
existence of solitary wave solution was proved in \cite{presor} (for
the case $\alpha=0$ in \cite{spahani} it was provided an incomplete
proof).\par
Observe that \eqref{BOeq} is a two-dimensional case of the
Benjamin-Ono equation
\begin{eqnarray}\partial_tu+\mathcal H\partial_x^2{u}+u\partial_xu=0,\label{BOeq1}
\end{eqnarray}
which describes certain models in physics about wave propagation in a
stratified thin regions (see \cite{Benjamin} and \cite{Ono}). This last
equation shares with the equation KdV 
\begin{eqnarray} 
u_t+u_x+uu_x+u_{xxx}=0 \label{KdVeq}
\end{eqnarray}
many interesting properties. For example, they both have infinite
conservation laws, they have solitary waves as solutions which are
stable and behave like soliton (this last is evidenced by the
existence of multisoliton type solutions) (see \cite{ablow} and
\cite{matsuno}). Also, the local and global well-posedness was proven
in the Sobolev spaces context (in low regularity spaces inclusive,
see, e.g., \cite{iorio2}, \cite{ponce}, \cite{kenko}, \cite{kochtz}
and \cite{tao})\par
The plan of this paper is the following. In Section \ref{sec2} we present
the basic notations and results that we will need. In Section
\ref{sec3} we examine the local well-posedness in $H^s$ and $X^s$. To
do so, we will use the abstract theory developed by Kato in
\cite{katoLN448} (see also \cite{kato1983}) to prove the local
well-posedness of quasi-linear equations of evolution.  Kato
considered the problem
\begin{equation}\label{Q}
\begin{array}{ll}
&\partial_t u +A(t,u)u=f(t,u)\in X, \ 0< t , \\
&u(0)=u_0\in Y,
\end{array}
\end{equation}
in a Banach space $X$ with inicial data in a dense subspace $Y$ of
$X$, where $A$ is a map from $\re\times X$ into the linear operators
of $X$ with dense domain and $f(t,u)$ is a function from $\re \times
Y$ to $X$, which satisfy the following conditions:
\par
$(X)$ There exists an isometric isomorphism  $S$ from $Y$ to $X$. \\
There exist $T_0>0$ and $W$ a open ball with center $w_0$ such that: \par
$(A_1)$ For each $(t,y)\in [0,T_0]\times W$, the linear operator
$A(t,y)$ belongs to $G(X,1,\beta)$, where $\beta$ is a positive
real number. In other words, $-A(t,y)$ generates a $C_0$
semigrup such that
\[
\normp{e^{-sA(t,y)}}{\mathcal B(X)}\le e^{\beta s}, \ \text{para}\
s\in[0,\infty).
\]
it should be noted that if  $X$ is a Hilbert space, $A \in
G(X,1,\beta)$ if, and only if,
\begin{enumerate}
\item[a)] $\langle Ay, y\rangle_{X}\ge -\beta\normp{y}{X}^2$ for all
$y\in D(A)$,
\item[b)] $(A+\lambda)$ is onto for all $\lambda >\beta.$
\end{enumerate} (See \cite{katoPT} or \cite{reed}) \par
$(A_2)$ For all $(t,y)\in[0,T_0]\times W$ the operator
$B(t,y)=[S,A(t,y)]S^{-1}\in \mathcal B(X)$ and is is uniformly
bounded, i.e., there exists $\lambda_1>0$ such that
\begin{align*}
& \normp{B(t,y)}{\mathcal B(X)}\le\lambda_1\ \ \text{for all}\
(t,y)\in[0,T_0]\times W, \\ \intertext{In addition, for some $\mu_1>0$,
    it hat, for all $y$ and $z\in W$,}
&\normp{B(t,y)-B(t,z)}{\mathcal B(X)}\le\mu_1\normp{y-z}{Y}.
\end{align*}\par
$(A_3)$ $Y\subseteq D(A(t,y))$, for each $(t,y)\in[0,T_0]\times W,$
(the restriction of $A(t,y)$ to $Y$ belongs to $\mathcal B(Y,X)$) and,
for each fixed $y \in W$, $t\to A(t,y)$ is strongly continuous.
Furthermore, for each fixed $t\in[0,T_0]$, it is satisfied the following
Lipschitz condition,
\[
\normp{A(t,y)-A(t,z)}{\mathcal B(Y,X)}\le\mu_2\normp{y-z}{X},
\]
where $\mu_2\ge 0$ is a constant.

$(A_4)$ $A(t,y)w_0\in Y$ for all $(t,y)\in[0,T]\times
W$. Also, there exists a constant $\lambda_2$ such that
\[
\normp{A(t,y)w_0}{Y}\le\lambda_2, \ \text{for all}\
(t,y)\in[0,T_0]\times W
\]\par
$(f_1)$ $f$ is a bounded function from $[0,T_0]\times W$ in $Y$, i.e.,
there exists $\lambda_3$ such that
\[
\normp{f(t,y)}{Y} \le\lambda_3, \ \text{for all}\ (t,y)\in[0,T_0] \times W,
\] Besides, the function $t\in [0,T_0] \mapsto f(t,y)\in Y$ is continuous
with respect to $X$ topology and, for all $y$ and $z\in Y$, we have that
\begin{align*}
&\normp{f(t,y)-f(t,z)}{X}\le\mu_3\normp{y-z}{X}, \end{align*} when
$\mu_3\ge 0$ is a constant.
\begin{thm}[Kato]\label{tkato}
Suppose that the conditions $(X),$ $(A_1)-(A_4)$ y $(f_1)$ are
satisfied. For $u_0\in Y$, there exist $0<T<T_0$ and a unique $u\in
C([0,T];Y)\cap C^1((0,T);X)$ solution to \eqref{Q}.  Besides, the
map $u_0\to u$ is continuous in the following sence:
consider the following  sequence of Cauchy  problems,
\begin{equation}\label{dependencia}
\begin{aligned}
&\partial_t u_n+A_n(t,u_n)u_n=f_n(t,u_n) &t>0 \\
&u_n(0)=u_{n_0}  & n\in \mathbb N.
\end{aligned}
\end{equation}
Assume that conditions $(X)$, $(A_1)$--$(A_4)$ and $(f_1)$ hold
for all $n\ge 0$ in \eqref{dependencia}, with the same
$X, \ Y$ and $S$, and the corresponding $\beta$,
$\lambda_1$--$\lambda_3$, $\mu_2$--$\mu_3$ can be chosen
independently from $n$. Also assume that
\begin{align*}
\mathop{s\text{-}\lim}_{n\to\infty} A_n(t,w)&=A(t,w) \ \text{in } \ B(X,Y)\\
\mathop{s\text{-}\lim}_{n\to\infty} B_n(t,w)&=B(t,w) \ \text{in } \ B(X)\\
\lim_{n\to\infty} f_n(t,w)&=f(t,w) \ \text{in } \ Y\\
\lim_{n\to\infty} u_{n_0}&=u_0  \ \text{in } \ Y,
\end{align*}
where $s$-$\lim$ denotes the strong limit. Then, $T$ can be so chosen
in such a way that $u_n\in C([0,T],Y)\cap C^1((0,T),X)$ and
\[
\lim_{n\to\infty} \sup_{[0,T]}\normp{u_n(t)-u(t)}{Y}=0.
\]
\end{thm}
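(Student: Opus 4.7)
The plan is to construct the solution via a Picard-type iteration applied to a linearized version of \eqref{Q}, and to derive all required estimates from the hypotheses $(X)$, $(A_1)$--$(A_4)$, $(f_1)$ by conjugating with the isometry $S\colon Y\to X$. First I would fix $R>0$ so that the closed ball $W_R=\{v\in Y: \|v-w_0\|_Y\le R\}$ is contained in $W$, and then for each $v\in C([0,T];W_R)$ I would consider the linear, non-autonomous problem $\partial_tu+A(t,v(t))u=f(t,v(t))$ with $u(0)=u_0$. By $(A_1)$ and the strong continuity in $t$ from $(A_3)$, the family $\{A(t,v(t))\}_{t\in[0,T]}$ is a stable CD-system in $X$, so by the classical Kato--Tanabe theory it generates a unique evolution operator $U_v(t,s)$ on $X$, and Duhamel's formula yields a solution $u(t)=U_v(t,0)u_0+\int_0^tU_v(t,s)f(s,v(s))\,ds$.

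The central point is to lift the construction to $Y$. By $(A_2)$, the conjugated operator $SA(t,v)S^{-1}=A(t,v)+B(t,v)$ differs from $A(t,v)$ by a uniformly bounded perturbation, hence this conjugated family is also stable in $X$; consequently $U_v(t,s)$ restricts to a bounded operator on $Y$ with norm $\le e^{(\beta+\lambda_1)(t-s)}$. Combining this with $(A_4)$ (to treat the inhomogeneity coming from $A(t,v)w_0$ when one writes $u=(u-w_0)+w_0$) and with $(f_1)$, I obtain an a priori estimate of the form
\[
\|u(t)\|_Y\le e^{Ct}\bigl(\|u_0\|_Y+Ct(\lambda_2+\lambda_3)\bigr),
\]
with $C=C(\beta,\lambda_1)$. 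Choosing $T\in(0,T_0]$ small enough (depending on $\|u_0\|_Y$, $R$, and the constants $\beta,\lambda_1,\lambda_2,\lambda_3$), the map $v\mapsto u$ sends $C([0,T];W_R)$ into itself. Now setting $u^{(0)}\equiv u_0$ and $u^{(n+1)}$ as the output of the linearization at $v=u^{(n)}$, this yields a sequence bounded in $C([0,T];Y)$.

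To pass to the limit I would prove that $\{u^{(n)}\}$ is Cauchy in $C([0,T];X)$. Subtracting two consecutive linearizations and using the Lipschitz estimates in $(A_3)$ and $(f_1)$ together with a standard energy estimate from $(A_1)$ applied to $u^{(n+1)}-u^{(n)}$, I obtain
\[
\|u^{(n+1)}(t)-u^{(n)}(t)\|_X\le C'\int_0^t\|u^{(n)}(s)-u^{(n-1)}(s)\|_X\,ds,
\]
so by iteration convergence follows on a possibly smaller interval. The limit $u$ belongs to $C([0,T];X)\cap L^\infty([0,T];Y)$, satisfies \eqref{Q} in $X$, and uniqueness in this class is obtained by the same Gronwall argument. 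Finally, $u\in C^1((0,T);X)$ follows from the equation once $u\in C([0,T];Y)$ is known.

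The main obstacle, and the most delicate step, is upgrading $u$ from $L^\infty([0,T];Y)\cap C([0,T];X)$ to $C([0,T];Y)$: weak-$\ast$ limits only give weak continuity in $Y$, so one must show $t\mapsto\|u(t)\|_Y$ is continuous. This is handled by proving right-continuity at $t=0$ via the a priori bound (giving $\limsup_{t\to 0^+}\|u(t)\|_Y\le\|u_0\|_Y$) and combining with weak lower semicontinuity ($\liminf\ge\|u_0\|_Y$); by time-translation invariance of the hypotheses the argument propagates to every $t_0\in[0,T]$. Continuous dependence, and hence the statement about the sequence \eqref{dependencia}, is proved by repeating the Cauchy argument in $X$ with $A,f$ replaced by $A_n,f_n$: the uniformity of the constants $\beta,\lambda_i,\mu_j$ allows a uniform choice of $T$, the uniform $Y$-bounds give weak compactness of $u_n$ in $L^\infty([0,T];Y)$, and the strong limits assumed for $A_n,B_n,f_n$ let one pass to the limit in the $X$-estimate, yielding $u_n\to u$ in $C([0,T];X)$ first and then in $C([0,T];Y)$ by the same continuity-of-norm argument.
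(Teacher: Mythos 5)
The paper itself offers no proof of this theorem: it states immediately afterwards that ``A proof of this theorem can be seen in \cite{katoLN448} and \cite{kobayasi}'', so there is no in-paper argument to compare against. Your outline is essentially a faithful reconstruction of Kato's own proof from those references --- freezing the coefficient to get a linear evolution system, using $(A_1)$--$(A_3)$ and the conjugation $SA(t,v)S^{-1}=A(t,v)+B(t,v)$ to build and $Y$-bound the evolution operators, contracting in the weaker $C([0,T];X)$ metric while keeping uniform $Y$-bounds, and upgrading to $C([0,T];Y)$ by the $\limsup$/weak-lower-semicontinuity argument --- so it is the same approach; the one point to make explicit is that the final step (weak continuity plus continuity of the $Y$-norm implying strong continuity) requires $Y$ to be reflexive with the Radon--Riesz property, which Kato assumes and which holds trivially in this paper's applications where $X=L^2$ and $Y=H^s$ are Hilbert spaces.
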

A proof of this theorem can be seen in \cite{katoLN448} and
\cite{kobayasi}.\par
In the Section \ref{sec4} is examined the local well-posedness in the
weighted spaces $X^s(w^2)$. For this, we use ideas of Milanés in
\cite{aniura2003} (see also \cite{aniuraphdt}). Milánes, in her work,
examines the local well-posedness of the problem
\begin{equation}\label{anieq}
\left\{
\begin{aligned}
&u_t+u^pu_y+\mathcal Hu_{xy}=0 \quad p\in{\nat} \\
 &u(0;x,y)=\phi{(x,y)}
 \end{aligned}
 \right.
\end{equation} 
in weighted Sobolev spaces, extending ideas developed by Iório in
\cite{iorio1986} and \cite{iorio2}. Finally, in the Section \ref{sec5}
we present the asymptotic behaviour of solutions with small initial
data. This is obtained from $L^p$-$L^q$ estimates of the group
associated to the linear part of the equation \eqref{BOeq} analogous
to those of the Schrödinger group $e^{it\Delta}$ in dimension two, as
it is done by Milánes in \cite{aniura2003} for the equation
\eqref{anieq}. Observe that this property is shared by generalized
Benjamin-Ono equation (in one dimension, see \cite{KPV1994}), from
where this result is suggested. This, also, allows to prove the global
existence for these small datas.

\section{Preliminaries}\label{sec2}
In this paper we systematically use the following notations.

\begin{enumerate}
\item $\SSh({\R}^n)$  is the Schwartz space. If $n=2$,
 we simply write $\SSh$.
\item $\SSh'({\R}^n)$ is the space of tempered distributions.  If $n=2$, we simply write $\SSh'$.
\item For $f\in{\SSh}'({\R}^n)$, $\widehat{f}$ is the Fourier
  transform of $f$ and $\check{f}$ is the inverse Fourier transform of
  $f$. We recall that $$ \widehat f(\xi) = {(2\pi)^{-\frac n2}}
  \int_{\re^n} f(x)e^{i\langle x,\xi\rangle} dx, $$ for all $\xi\in
  \re^n$, when $f\in \SSh(\re^n)$.
\item $\mathcal H=\mathcal H^{(x)}$ is the Hilbert transform with
  respect to the variable $x$. If $f\in{\SSh}({\R}^2)$, $$\mathcal H f
  (x,y) = \sqrt\frac2\pi \left(\mathrm{p.v.}  \int_{-\infty}^\infty
    \frac1{\xi-x} f(\xi,y)\, d\xi\right). $$
\item For $s\in{\R}$,
$H^s=H^s({\R}^2)$
is the Sobolev space of order $s$.  
\item The inner product in $H^s$ is denoted as $\langle f,g\rangle_s=\int_{{\R}^2}(1+\xi^2+\eta^2)^s\widehat{f}\overline{\widehat{g}}d\xi{d}\eta.$
\item $X^s= \{f\in{H}^s({\R}^2)\, \big|\, f=\partial_x g, $ for
  some $ g \in{H}^s({\R}^2) \}$.
\item $X^s(\rho)$ is the espace $X^s(\rho)=X^s\cap L^2(\rho (x,y)dxdy)$
\item $\Lambda^s=(1-\Delta)^{s/2}$.
\item $L_p^s(\re^n)=\{ f\in \SSh'(\re^n) \,\big|\, \Lambda^sf\in L_p(\re^n)\}$. 
\item For $f \in L_p^s(\re^2)$, $|f|_{p,s}= \|\Lambda^sf\|_{L_p(\re^2)}$.
\item $[A,B]$ will denote the commutator of $ A $ and $ B $.
\end{enumerate}
The following results about commutators of operators are part of the
important stock of tools that are used in the analysis.\par The first
of them is given by the following proposition due to Kato (its proof
can be found in \cite{katoLN448}).

\begin{prop}[Kato's inequality]\label{deskato}
Let $f\in H ^s$, $s>{2}$, $\Lambda=(1-\Delta^2)^{1/2}$ and
$M_f$ be the multiplication operator by $f$. Then, for $|\tilde t|,
\ |\tilde s|\le s-1$, $\Lambda^{-\tilde s}[\Lambda^{\tilde s+\tilde t
+1}, M_f]\Lambda^{-\tilde t }\in B(L^2(\mathbb R^2))$ and
\begin{equation}\label{lamilagrosa}
\left\|{\Lambda^{-\tilde s}[\Lambda^{\tilde s+\tilde t +1},
M_f]} \Lambda^{-\tilde t }\right\|_{B(L^2(\mathbb R^2))}\le
c\normp{{\nabla f} }{H^{s-1}}.
\end{equation}
\end{prop}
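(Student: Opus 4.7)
This is a Kato--Ponce-type commutator estimate. My plan is to reduce via Plancherel to an $L^2(\re^2)$ bound for an explicit Fourier-side integral operator, control the kernel pointwise by combining the mean-value theorem with Peetre's inequality, and close the argument with a weighted Cauchy--Schwarz step whose integrability is guaranteed by the hypothesis $s>2$.

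For $g\in\SSh(\re^2)$, the operator in question is represented on the Fourier side as an integral operator with kernel
\[
K(\xi,\eta)=\langle\xi\rangle^{-\tilde s}\bigl(\langle\xi\rangle^{\tilde s+\tilde t+1}-\langle\eta\rangle^{\tilde s+\tilde t+1}\bigr)\langle\eta\rangle^{-\tilde t}\,\widehat f(\xi-\eta),
\]
where $\langle\xi\rangle:=(1+|\xi|^2)^{1/2}$; by Plancherel, the claim reduces to the $L^2(\re^2)$-boundedness of this operator with norm at most $c\nor{\nabla f}_{H^{s-1}}$. The mean-value theorem applied to $\phi(x)=\langle x\rangle^{\tilde s+\tilde t+1}$, whose gradient has modulus bounded by $C\langle x\rangle^{\tilde s+\tilde t}$, yields
\[
\bigl|\langle\xi\rangle^{\tilde s+\tilde t+1}-\langle\eta\rangle^{\tilde s+\tilde t+1}\bigr|\le C|\xi-\eta|\bigl(\langle\xi\rangle+\langle\eta\rangle\bigr)^{\tilde s+\tilde t}.
\]
Peetre's inequality $\langle\xi\rangle^a\le C_a\langle\xi-\eta\rangle^{|a|}\langle\eta\rangle^a$, applied case by case in the signs of $\tilde s$, $\tilde t$, and $\tilde s+\tilde t$, then lets me redistribute weights to bound $|K(\xi,\eta)|$ by a finite sum of terms of the form $w^\xi_j(\xi)\,\langle\xi-\eta\rangle^{m_j}\,|\widehat{\nabla f}(\xi-\eta)|\,w^\eta_j(\eta)$, with $m_j\le s-1$ and matching orders of $w^\xi_j,w^\eta_j$; the restriction $|\tilde s|,|\tilde t|\le s-1$ is exactly what keeps the combined exponent $m_j$ below the threshold $s-1$.

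The final step is to extract the $L^2$-operator bound while preserving the sharp norm $\nor{\nabla f}_{H^{s-1}}$. The main obstacle of the proof is that a crude $L^1\ast L^2\to L^2$ Young's inequality would demand $\nor{\langle\cdot\rangle^{s-1}\widehat{\nabla f}}_{L^1(\re^2)}$, which by Cauchy--Schwarz costs a full extra derivative on $f$ in two dimensions. To avoid this loss one must not collapse the $\xi$- and $\eta$-specific weights $w^\xi_j,w^\eta_j$ onto the variable $\xi-\eta$; instead, after a further use of Peetre to consolidate them, a weighted Cauchy--Schwarz in $\eta$ pairs $\langle\cdot\rangle^{s-1}\widehat{\nabla f}$ with its natural $L^2$ norm (producing $\nor{\nabla f}_{H^{s-1}}$), and integration in $\xi$ closes the estimate. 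The hypothesis $s>2$ enters both in the $L^2(\re^2)$-integrability of $\langle\cdot\rangle^{-(s-1)}$ that this Cauchy--Schwarz step requires, and through the Sobolev embedding $H^{s-1}(\re^2)\hookrightarrow L^\infty(\re^2)$, which absorbs residual weight factors in the boundary cases of the sign analysis.
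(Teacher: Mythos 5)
The paper does not actually prove this proposition: it is quoted as Kato's inequality and the reader is referred to \cite{katoLN448}, so there is no in-paper argument to compare yours against. Your sketch is, in substance, the standard (Kato's) proof of that lemma: pass to the Fourier-side kernel $K(\xi,\eta)=\langle\xi\rangle^{-\tilde s}(\langle\xi\rangle^{\tilde s+\tilde t+1}-\langle\eta\rangle^{\tilde s+\tilde t+1})\langle\eta\rangle^{-\tilde t}\widehat f(\xi-\eta)$, control the symbol difference by the mean value theorem, redistribute weights by Peetre, and conclude by an $L^2$ bound in which $s>2$ enters exactly where you say it does (square-integrability of $\langle\cdot\rangle^{-(s-1)}$ on $\re^2$, equivalently $\Vert\widehat{\nabla f}\Vert_{L^1}\le C\normp{\nabla f}{H^{s-1}}$). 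Your diagnosis that a crude Young inequality loses a full derivative, and that the fix is a kernel bound of the form $h(\xi-\eta)g(\eta)$ with $h=\langle\cdot\rangle^{s-1}|\widehat{\nabla f}|\in L^2$ and $g\in L^2$, is also the right one.

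The one step you should make explicit, because as written it does not go through uniformly, is a decomposition of the frequency plane into the regions $|\xi-\eta|\le\frac12\langle\eta\rangle$ and $|\xi-\eta|>\frac12\langle\eta\rangle$. A single global application of the mean value theorem plus Peetre leaves you, in the worst case, with terms such as $\langle\xi\rangle^{\tilde t}\langle\eta\rangle^{-\tilde t}\,|\widehat{\nabla f}(\xi-\eta)|$; here the $\xi$- and $\eta$-weights have "matching orders" but they cannot be consolidated into a genuinely decaying $L^2$ weight in one variable when $\langle\xi\rangle\sim\langle\eta\rangle$ are both large and $|\xi-\eta|$ is small --- any further Peetre step just converts the ratio into a power of $\langle\xi-\eta\rangle$ and you are back to the lossy Young estimate. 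On the first region one instead keeps the mean value bound, where $\langle\xi\rangle\sim\langle\eta\rangle$ makes the weights cancel exactly and the kernel is bounded by $C|\widehat{\nabla f}(\xi-\eta)|$, so Young plus $\Vert\widehat{\nabla f}\Vert_{L^1}\le C\normp{\nabla f}{H^{s-1}}$ (this is where $s-1>1$ is used) closes that piece; only on the second region, where $\langle\xi\rangle$ and $\langle\eta\rangle$ are both dominated by $C\langle\xi-\eta\rangle$, does your weighted Cauchy--Schwarz with the factor $\langle\eta\rangle^{-(s-1)}\in L^2(\re^2)$ apply. With that splitting inserted, your outline is a correct and complete plan, and it coincides with the proof the paper is citing.
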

\begin{prop}[Kato-Ponce's inequality]\label{deskatoponce}
Let $s>{0}$, $1<p<\infty$, $\Lambda=(1-\Delta^2)^{1/2}$ and
$M_f$ be the multiplication operator by $f$. Then,  
\begin{equation}\label{lamilagrosa2}
\left|[\Lambda^{ s}, M_f] g\right |_{p}\le c\left( |\nabla
f|_{\infty} |\Lambda^{ s-1}g|_{p}+ |\Lambda^{ s}f|_{p}| g|_{\infty}
\right),
\end{equation}
for all $f$ and $g\in \SSh$
\end{prop}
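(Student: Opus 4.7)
The plan is to prove the Kato--Ponce commutator estimate by the paraproduct/Littlewood--Paley technique, exactly as in Kato--Ponce's original argument (and later generalized by Coifman--Meyer). Fix a standard dyadic Littlewood--Paley family $\{\Delta_j\}_{j\ge -1}$ on $\mathbb R^2$ with low--frequency cutoffs $S_j=\sum_{k\le j-3}\Delta_k$, and decompose the product via Bony's formula
\[
 fg \;=\; T_fg \;+\; T_gf \;+\; R(f,g) \;:=\; \sum_j S_{j-3}f\cdot\Delta_jg \;+\; \sum_j S_{j-3}g\cdot\Delta_jf \;+\; \sum_{|j-k|\le 2}\Delta_jf\cdot\Delta_kg.
\]
The commutator $[\Lambda^s,M_f]g$ splits into three corresponding pieces, and each will be estimated in $L^p$ by means of the square--function characterization $|h|_{p,s}\sim\bigl\|\bigl(\sum_j 2^{2js}|\Delta_jh|^2\bigr)^{1/2}\bigr\|_{L^p}$ and Bernstein's inequality.

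For the low--high paraproduct, $\Lambda^s(S_{j-3}f\cdot\Delta_jg)-S_{j-3}f\cdot\Lambda^s\Delta_jg$ is a genuine commutator acting on a function spectrally localized near $|\xi|\sim 2^j$. A first--order Taylor expansion of the symbol $(1+|\xi+\eta|^2)^{s/2}$ in the small variable $\eta$ (the frequency of $S_{j-3}f$) --- equivalently, the Coifman--Meyer bilinear multiplier theorem applied to $m(\xi,\eta)=(1+|\xi+\eta|^2)^{s/2}-(1+|\xi|^2)^{s/2}$ --- gives the bound $2^{j(s-1)}\|\nabla S_{j-3}f\|_{\infty}|\Delta_jg|$ per piece, which after the vector--valued Fefferman--Stein inequality yields the first term $|\nabla f|_{\infty}\,|\Lambda^{s-1}g|_{p}$. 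The high--low paraproduct $T_gf$ needs no cancellation: I treat $\Lambda^s(\Delta_jf\cdot S_{j-3}g)$ and $\Delta_jf\cdot\Lambda^sS_{j-3}g$ separately, use $|S_{j-3}g|\lesssim\mathcal Mg\lesssim |g|_{\infty}$ together with Bernstein, and reassemble via Littlewood--Paley to produce $|\Lambda^sf|_{p}|g|_{\infty}$.

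The main obstacle is the resonant remainder $R(f,g)$: since $\Delta_jf\cdot\Delta_kg$ with $|j-k|\le 2$ has Fourier support in a \emph{ball} of radius $\sim 2^j$ rather than in an annulus, individual dyadic pieces of the product interact with many Littlewood--Paley frequencies of the output, and the simple symbol expansion above breaks down. I would regroup $R(f,g)=\sum_j \widetilde R_j$ with $\widetilde R_j$ ball--supported at scale $2^j$, use Bernstein to transfer all derivatives onto $f$ and obtain $\|\Lambda^s\widetilde R_j\|_{L^p}\lesssim 2^{js}\|\Delta_jf\|_{L^p}|g|_{\infty}$, and then recombine by the square--function inequality to recover $|\Lambda^sf|_{p}|g|_{\infty}$. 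Summing the three contributions (and noting that the low--frequency part $S_{-1}$ is handled by crude $L^p\to L^p$ bounds on the smoothing operator $\Lambda^sS_{-1}$) gives \eqref{lamilagrosa2} for all $1<p<\infty$, $s>0$ and all $f,g\in\SSh$.
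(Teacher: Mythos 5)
The paper offers no proof of this proposition: it is quoted as a known tool, namely the commutator estimate of Kato and Ponce (Comm.\ Pure Appl.\ Math.\ 41 (1988), Lemma X.1), whose original derivation rests on the Coifman--Meyer bilinear multiplier theorem. Your outline is therefore not competing with an argument in the text; it is a reconstruction of the standard proof, and as such it is sound. The three-way Bony splitting, the first-order expansion of the symbol $(1+|\xi+\eta|^2)^{s/2}$ in the low frequency $\eta$ to extract $|\nabla f|_{\infty}|\Lambda^{s-1}g|_{p}$ from the low--high paraproduct, the cancellation-free treatment of the high--low term, and the identification of the resonant piece as the place where $s>0$ is genuinely used form exactly the right skeleton. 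The one step you should make precise is the recombination of the resonant remainder: since each $\widetilde R_j$ is supported in a ball of radius $\sim 2^j$ rather than an annulus, the square-function characterization of $L^s_p$ cannot be invoked directly; you need the standard companion lemma that for $s>0$ one still has $\bigl\|\sum_j u_j\bigr\|_{L^s_p}\le c\,\bigl\|\bigl(\sum_j 4^{js}|u_j|^2\bigr)^{1/2}\bigr\|_{L^p}$ for ball-supported $u_j$, because each output block $\Delta_m$ receives contributions from all $j\ge m-C$ and the geometric sum $\sum_{j\ge m}2^{(m-j)s}$ converges precisely because $s>0$. You acknowledge this implicitly, but it is the only point at which the argument would silently fail for $s\le 0$, so it deserves an explicit line; with that lemma in place the sketch closes correctly for all $1<p<\infty$, $s>0$ and $f,g\in\mathcal S$.
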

\begin{corol}\label{needglob}
For $f$ and $g\in \SSh$,
$$|f,g|_{s,p}\le c\left( |f|_{\infty} |\Lambda^{ s}g|_{p}+ |\Lambda^{
    s}f|_{p}| g|_{\infty} \right).
$$
\end{corol}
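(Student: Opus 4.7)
My plan is to derive the Leibniz-type estimate from the commutator bound in Proposition \ref{deskatoponce} via the operator identity
\[
\Lambda^s(fg) = f\Lambda^s g + [\Lambda^s, M_f]g,
\]
valid on $\SSh$. Taking $L^p$ norms and applying the triangle inequality gives
\[
|fg|_{s,p} \le \|f\Lambda^s g\|_p + \|[\Lambda^s, M_f]g\|_p.
\]
The first summand is controlled by H\"older's inequality as $\|f\Lambda^s g\|_p \le |f|_\infty |\Lambda^s g|_p$, which is already one of the two terms needed. Proposition \ref{deskatoponce} yields
\[
\|[\Lambda^s, M_f]g\|_p \le c\bigl(|\nabla f|_\infty |\Lambda^{s-1}g|_p + |\Lambda^s f|_p |g|_\infty\bigr),
\]
and the second term on the right-hand side is exactly the other term needed.

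The main obstacle is to absorb the mixed remainder $|\nabla f|_\infty |\Lambda^{s-1}g|_p$ into the right-hand side of the corollary, since it carries a derivative on $f$ rather than $|f|_\infty$ and only $\Lambda^{s-1}$ on $g$. My plan is to invoke two Gagliardo--Nirenberg-type interpolation inequalities, available under the paper's standing hypothesis $s>2$ in dimension $n=2$ with $p=2$ (so $s>1+n/p$): for $\theta = 1/(s - n/p)\in(0,1)$,
\[
|\nabla f|_\infty \le c\,|f|_\infty^{1-\theta}\, |\Lambda^s f|_p^{\theta}, \qquad |\Lambda^{s-1}g|_p \le c\,|g|_\infty^{\theta}\, |\Lambda^s g|_p^{1-\theta}.
\]
Multiplying these and regrouping factors gives
\[
|\nabla f|_\infty\, |\Lambda^{s-1}g|_p \le c\bigl(|f|_\infty |\Lambda^s g|_p\bigr)^{1-\theta}\bigl(|\Lambda^s f|_p |g|_\infty\bigr)^{\theta},
\]
and Young's inequality with conjugate exponents $1/(1-\theta)$ and $1/\theta$ dominates this by a constant times $|f|_\infty |\Lambda^s g|_p + |\Lambda^s f|_p|g|_\infty$. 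Combining the three estimates produces the corollary.

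The subtlest point is the matching of the interpolation exponents: a direct scaling analysis on the dilations $f_\lambda(x)=f(\lambda x)$ forces \emph{both} Gagliardo--Nirenberg inequalities to take the same exponent $\theta = 1/(s - n/p)$, and only this coincidence allows Young's inequality to split the mixed term cleanly into the two summands on the right-hand side of the corollary with no extraneous error. Everything else is a routine application of the triangle inequality, H\"older's inequality and Proposition \ref{deskatoponce}.
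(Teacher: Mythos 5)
The paper offers no proof of Corollary \ref{needglob}: it is quoted as the standard Kato--Ponce product (Leibniz) rule, the usual companion of Proposition \ref{deskatoponce}, so the only question is whether your argument stands on its own. Your decomposition $\Lambda^s(fg)=f\Lambda^s g+[\Lambda^s,M_f]g$ (note the left side of the statement should indeed read $|fg|_{s,p}$), the H\"older step, and the application of Proposition \ref{deskatoponce} are fine. The gap is the second of your two interpolation inequalities: the estimate $|\Lambda^{s-1}g|_p\le c\,|g|_\infty^{\theta}\,|\Lambda^{s}g|_p^{1-\theta}$ with $\theta=1/(s-n/p)$ is \emph{false}. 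Dilation invariance, which is all your scaling analysis verifies, is necessary but not sufficient for a Gagliardo--Nirenberg inequality; one also needs the convexity condition $a\ge j/m$ on the exponent $a$ of the highest-order norm, and here $a=1-\theta=(s-1-n/p)/(s-n/p)<(s-1)/s=j/m$. Concretely, with $n=2$, $p=2$, $s=3$ (so $\theta=\tfrac12$), let $g_\varepsilon(x,y)=\varepsilon\sin(x/\varepsilon)\chi(x,y)$ for a fixed bump $\chi$. Then $|g_\varepsilon|_\infty\sim\varepsilon$, $|\Lambda^{2}g_\varepsilon|_2\sim\varepsilon^{-1}$ and $|\Lambda^{3}g_\varepsilon|_2\sim\varepsilon^{-2}$, so your inequality would give $\varepsilon^{-1}\lesssim\varepsilon^{1/2}\cdot\varepsilon^{-1}=\varepsilon^{-1/2}$, which fails as $\varepsilon\to0$. (Your first inequality, $|\nabla f|_\infty\le c\,|f|_\infty^{1-\theta}|\Lambda^s f|_p^{\theta}$, does satisfy the convexity condition and is a genuine Landau--Kolmogorov type estimate; the problem is only with the one for $g$.)

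As a result the mixed remainder $|\nabla f|_\infty|\Lambda^{s-1}g|_p$ is not absorbed by your scheme, and the ``coincidence of exponents'' you highlight as the subtle point is precisely where the argument breaks: the two exponents do match by scaling, but only one of the two interpolation inequalities actually holds. The product estimate itself is true, but it is not a formal consequence of the stated commutator bound plus H\"older and interpolation. The standard proofs (Kato and Ponce's original appendix, or the paraproduct/Coifman--Meyer argument) estimate $\Lambda^s(fg)$ directly by splitting the bilinear symbol into the frequency regions $|\xi|\ll|\eta|$, $|\eta|\ll|\xi|$ and $|\xi|\sim|\eta|$, placing the low-frequency factor in $L^\infty$ on each piece; alternatively one needs a version of Proposition \ref{deskatoponce} in which no $|\nabla f|_\infty|\Lambda^{s-1}g|_p$ term appears. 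To make your write-up correct you should either cite the product rule as a known theorem, as the paper implicitly does, or replace the interpolation step by such a frequency decomposition.
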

The following theorem is due to A. P. Calderón (see \cite{calderon1965})

\begin{thm}[Calderón's commutator theorem]
  Let $A:{\R}\to{\R}$ be a Lipschitz function. Then, for any
  $f\in{S}({\R})$,
 \[\|[\mathcal H,A]f'\|_0\leq{C}|A'|_{\infty}\|f\|_0.\]
\end{thm}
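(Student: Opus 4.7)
The plan is to reduce the commutator to the classical first Calder\'on commutator via an integration by parts, and then invoke Calder\'on's deep $L^2$ bound for that operator. From the kernel representation of the Hilbert transform in the preliminaries, cancellation of the diagonal piece in $\mathcal{H}(Af') - A\,\mathcal{H}f'$ gives
\begin{equation*}
[\mathcal{H}, A]f'(x) = c\,\mathrm{p.v.}\!\int_{\mathbb R} \frac{A(\xi) - A(x)}{\xi - x}\, f'(\xi)\, d\xi.
\end{equation*}
The Lipschitz hypothesis makes $(A(\xi) - A(x))/(\xi - x)$ pointwise bounded by $|A'|_\infty$, so for $f\in\SSh(\mathbb R)$ the principal value is absolutely convergent and an integration by parts in $\xi$ is legitimate, the boundary contributions vanishing because $f$ is Schwartz.

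Applying $\partial_\xi[(A(\xi) - A(x))/(\xi - x)] = A'(\xi)/(\xi - x) - (A(\xi) - A(x))/(\xi - x)^2$ splits the commutator as
\begin{equation*}
[\mathcal{H}, A]f'(x) = -c\,\mathcal{H}(A' f)(x) + c\,\mathcal{C}_A f(x),
\end{equation*}
where $\mathcal{C}_A f(x) = \mathrm{p.v.}\!\int_{\mathbb R} (A(\xi) - A(x))(\xi - x)^{-2}\, f(\xi)\, d\xi$ is the first Calder\'on commutator. The first summand is immediate: since $\mathcal{H}$ is an isometry on $L^2$, one has $\|\mathcal{H}(A' f)\|_0 \leq \|A' f\|_0 \leq |A'|_\infty\, \|f\|_0$.

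The substantive step, and the main obstacle, is the $L^2$ bound $\|\mathcal{C}_A f\|_0 \leq C|A'|_\infty \|f\|_0$. The kernel is of Calder\'on--Zygmund type, but its $L^2$-boundedness does not follow from size and regularity estimates alone; it requires a delicate cancellation argument. One route is Calder\'on's original proof in \cite{calderon1965}, which identifies $\mathcal{C}_A$ with the derivative at $t=0$ of the family of Cauchy integrals along the Lipschitz graphs $\{(\xi, tA(\xi))\}$ and then exploits contour deformation together with a perturbation/analytic-continuation argument in the parameter $t$. A modern alternative is to verify the hypotheses of the David--Journ\'e $T(1)$ theorem for $\mathcal{C}_A$, checking that $\mathcal{C}_A 1$ and $\mathcal{C}_A^* 1$ lie in $\mathrm{BMO}$ and that the weak boundedness property holds, both of which reduce to elementary estimates using the bound on $A'$. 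Once that $L^2$ bound is secured, combining with the Hilbert-transform estimate above yields the stated inequality.
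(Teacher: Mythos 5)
The paper offers no proof of this theorem at all --- it is quoted verbatim from \cite{calderon1965} --- and your argument, after the (correct) integration by parts rewriting $[\mathcal H,A]f'$ as $-c\,\mathcal H(A'f)+c\,\mathcal C_Af$, defers the entire substance, namely the $L^2$ bound for the first Calder\'on commutator $\mathcal C_A$ (which is equivalent to the very statement being proved, the easy term $\mathcal H(A'f)$ aside), to that same reference or to the $T(1)$ theorem; so in effect you take the paper's route of citation rather than proof, merely making explicit the equivalence of the two standard formulations. One technical inaccuracy worth fixing: in the truncated integration by parts over $|\xi-x|>\epsilon$ the boundary contributions at $\xi=x\pm\epsilon$ are $\mp\epsilon^{-1}\bigl(A(x\pm\epsilon)-A(x)\bigr)f(x\pm\epsilon)$, which do not vanish ``because $f$ is Schwartz''; they are merely bounded by $|A'|_\infty|f|_\infty$ and cancel in the limit only for a.e.\ $x$, using that the Lipschitz function $A$ is differentiable almost everywhere, and likewise the two pieces $A'(\xi)/(\xi-x)$ and $(A(\xi)-A(x))/(\xi-x)^2$ are individually non-integrable near $\xi=x$, so each must be kept as a principal value after the splitting.
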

\begin{lem} \label{gft} Let $g,h \in \mathcal{S}(\mathbb{R}^n)$ and $s
  \geq 0$. Then there exists a constant $C=C(s)$ such that
  \[ \ns{gh}{[s]} \leq C \left[ \ns{g}{A} \ns{h}{[s]}+ \ns{g}{[s]}\ns{g}{A} \right] \]
where $\ns{\phi}{[s]}=\ns{(-\Delta^2)^{\frac{s}{2}}}{o}$ y $\ns{\phi}{A}=\ns{\widehat{\phi}}{L^1}$
\end{lem}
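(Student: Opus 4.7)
The cleanest route is to pass to the Fourier side, where the product $gh$ becomes a convolution and the weight $|\xi|^{s}$ can be redistributed by a standard elementary inequality. Since $\widehat{gh}$ equals $\hat g * \hat h$ up to a harmless constant, the homogeneous norm $\ns{gh}{[s]}$ is, up to that constant, the $L^{2}$ norm of $|\xi|^{s}(\hat g * \hat h)(\xi)$.

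The first step I would carry out is the Peetre-type inequality
\[
|\xi|^{s}\le C_{s}\bigl(|\xi-\eta|^{s}+|\eta|^{s}\bigr),\qquad s\ge 0,
\]
valid for all $\xi,\eta\in\re^{n}$. Inserting this inside the convolution integral and applying the triangle inequality splits $|\xi|^{s}\,\widehat{gh}(\xi)$ into two pointwise convolution expressions: one in which the weight $|\cdot|^{s}$ sits on the $\hat g$ factor and one in which it sits on the $\hat h$ factor. This elementary inequality is the only moderately delicate ingredient; it is established by subadditivity when $0\le s\le 1$ and by the convexity bound $(a+b)^{s}\le 2^{s-1}(a^{s}+b^{s})$ when $s\ge 1$.

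The second step is to take $L^{2}_{\xi}$ norms on both sides and apply Young's convolution inequality in the form $\|f*k\|_{L^{2}}\le\|f\|_{L^{2}}\|k\|_{L^{1}}$ to each of the two pieces. The piece carrying $|\xi|^{s}$ on $\hat g$ produces $\||\xi|^{s}\hat g\|_{L^{2}}\,\|\hat h\|_{L^{1}}=\ns{g}{[s]}\ns{h}{A}$, while the other piece yields $\ns{g}{A}\ns{h}{[s]}$. Collecting these bounds gives exactly
\[
\ns{gh}{[s]}\le C\bigl(\ns{g}{A}\ns{h}{[s]}+\ns{g}{[s]}\ns{h}{A}\bigr),
\]
which is the statement (correcting what is evidently a typographical mixup in the last term of the display).

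The main potential obstacle is the Peetre-type splitting, but as noted it is a routine elementary estimate; once it is in place the rest is just Plancherel plus Young's inequality. No cancellation or commutator argument is needed, and the constant $C(s)$ depends only on the Peetre constant, which depends only on $s$.
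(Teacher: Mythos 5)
Your proof is correct. The paper states Lemma \ref{gft} without any proof (it is quoted as a known estimate, in the style of I\'orio's and Kato's product lemmas), so there is nothing to compare against; your argument --- writing $\widehat{gh}$ as a convolution, splitting the weight via $|\xi|^{s}\le C_{s}(|\xi-\eta|^{s}+|\eta|^{s})$, and applying Young's inequality $\|f*k\|_{L^{2}}\le\|f\|_{L^{1}}\|k\|_{L^{2}}$ to each piece --- is precisely the standard proof of this estimate, and your reading of the final term as a typo for $\ns{g}{[s]}\ns{h}{A}$ is the right one.
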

\begin{corol} \label{Corol B2} Let $g$, $h$ and $s$ be as in the Lemma
  \ref{gft} and $\frac{n}{2} < s_0$. Then there exists a constant
  $C=C(s)$ such that \[ \ns{g \partial_x h}{s} \leq C \left(
    \ns{g}{s}\ns{h}{s} + \ns{g}{s_0}\ns{h}{s+1} \right) \]
\end{corol}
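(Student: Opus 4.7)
The plan is to apply Lemma \ref{gft} to the product $g\cdot \partial_x h$ and then convert the homogeneous norm $\|\cdot\|_{[s]}$ and the Wiener-algebra norm $\|\cdot\|_A$ appearing in that lemma into inhomogeneous Sobolev norms. The decisive auxiliary ingredient is the embedding $H^{s_0}(\mathbb{R}^n)\hookrightarrow A$ valid for $s_0>n/2$, which comes out of the trivial computation
\[
\int_{\mathbb{R}^n}|\hat{\phi}(\xi)|\,d\xi \;\le\; \Bigl(\int_{\mathbb{R}^n}(1+|\xi|^2)^{-s_0}\,d\xi\Bigr)^{1/2}\|\phi\|_{s_0},
\]
by Cauchy--Schwarz.

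First I would split the $H^s$ norm into an $L^2$ piece and a homogeneous piece, using the equivalence $\|\cdot\|_s^2 \sim \|\cdot\|_0^2+\|\cdot\|_{[s]}^2$. The $L^2$ piece is immediately controlled by $\|g\partial_x h\|_0\le\|g\|_\infty\|\partial_x h\|_0 \le C\|g\|_{s_0}\|h\|_{s+1}$, absorbing it directly into the second summand on the right-hand side of the corollary.

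For the $[s]$ piece I would apply Lemma \ref{gft} with $h$ replaced by $\partial_x h$ (correcting the evident typo $\|g\|_A$ to $\|h\|_A$ in the lemma's statement), obtaining
\[
\|g\partial_x h\|_{[s]}\le C\bigl(\|g\|_A\,\|\partial_x h\|_{[s]} + \|g\|_{[s]}\,\|\partial_x h\|_A\bigr).
\]
On the Fourier side $\|\partial_x h\|_{[s]}=\||\xi|^s\xi_1\hat h\|_0\le\|h\|_{[s+1]}\le\|h\|_{s+1}$ is immediate, and the embedding above gives $\|g\|_A\le C\|g\|_{s_0}$ and $\|g\|_{[s]}\le\|g\|_s$. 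The first summand therefore contributes $C\|g\|_{s_0}\|h\|_{s+1}$ as required.

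The main obstacle is the last factor $\|\partial_x h\|_A = \int |\xi_1|\,|\hat h(\xi)|\,d\xi$, which must be bounded by $C\|h\|_s$ (not $\|h\|_{s+1}$) if we are to recover the first summand $\|g\|_s\|h\|_s$ on the right-hand side. Here I would write
\[
\int_{\mathbb{R}^n}\frac{|\xi_1|}{(1+|\xi|^2)^{s/2}}\,(1+|\xi|^2)^{s/2}|\hat h(\xi)|\,d\xi
\]
and apply Cauchy--Schwarz; this yields $\|\partial_x h\|_A\le C(s)\|h\|_s$ provided the weight integral $\int|\xi_1|^2(1+|\xi|^2)^{-s}\,d\xi$ is finite, i.e. $2s-2>n$, which in the present situation ($n=2$) is exactly the standing assumption $s>2$ of the paper. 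Combining this with $\|g\|_{[s]}\le\|g\|_s$ produces the term $C\|g\|_s\|h\|_s$, and adding the two contributions completes the proof. The only delicate point to watch is thus the dimensional threshold $s>n/2+1$, which is hidden in Lemma \ref{gft}'s generic statement but is implicitly enforced by the global hypothesis $s>2$ under which the corollary is used.
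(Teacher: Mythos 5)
The paper states this corollary without proof, so there is no argument of record to compare against; judged on its own terms, your derivation is correct and is the natural way to obtain the corollary from Lemma \ref{gft}. You rightly repair the typo in the lemma (the last factor must be $\|h\|_A$, not $\|g\|_A$), the splitting $\|\cdot\|_s^2\sim\|\cdot\|_0^2+\|\cdot\|_{[s]}^2$ is legitimate for $s\ge 0$, and each of the four factor estimates is sound: $\|g\|_A\le C\|g\|_{s_0}$ by Cauchy--Schwarz for $s_0>n/2$, $\|\partial_x h\|_{[s]}\le\|h\|_{s+1}$, $\|g\|_{[s]}\le\|g\|_s$, and the $L^2$ piece is absorbed into the second summand. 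The one place where your proof is narrower than the literal statement is the bound $\|\partial_x h\|_A\le C(s)\|h\|_s$, which forces $s>n/2+1$; as you note, this is exactly the standing hypothesis $s>2$ in dimension $n=2$ under which the corollary is used, so nothing is lost in context, but strictly speaking the corollary as written only assumes $s\ge 0$, and for $0\le s\le n/2+1$ your argument would only yield the weaker right-hand side $\|g\|_s\|h\|_{s_0+1}+\|g\|_{s_0}\|h\|_{s+1}$. It would also be worth recording that the constant then depends on $s_0$ as well as $s$ (through the embedding $H^{s_0}\hookrightarrow A$ and through the weight integral), a dependence the corollary's statement suppresses.
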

\section{Local theory in Sobolev spaces}\label{sec3}

In this section we examine the local well-posedness of a Cauchy
problem associated to a two-dimesional generalization of the
Benjamin-Ono equation given in \eqref{BOeq}.\par
First, we consider the local well-posedness in $H^s(\re^2)$ when $\gamma=0$.
\begin{thm}\label{tp}
  Let $s>2$ and $p\in{\nat}$. For $\phi\in{H^s({\R}^2)}$, there exist
  $T>0$, that depends only on $\|\phi\|_s$, and a unique
  $u\in{C([0,T],H^s({\R}^2))\cap{C}^1([0,T],H^{s-2}({\R}^2))}$
  solution to the Cauchy problem
 \begin{equation}\label{p1}
\left\{
\begin{aligned}
&u_t+\mathcal H\partial_x^2u +\alpha\mathcal H\partial_y^2u +u^pu_x=0  \\
 &u(0)=\phi .
 \end{aligned}
 \right.
\end{equation}
Furthemore, the map $\phi \mapsto u$ from $H^s$ to $C([0,T], H^s)$
is continuous.
\end{thm}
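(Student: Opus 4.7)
The plan is to cast \eqref{p1} in the abstract framework of Kato's Theorem \ref{tkato} and verify its hypotheses directly. Take $X = L^2(\R^2)$, $Y = H^s(\R^2)$, $S = \Lambda^s$, $w_0 = 0$, $W$ an open ball in $Y$ centred at $0$ of radius proportional to $\ns{\phi}{s}$, and rewrite \eqref{p1} as $u_t + A(t,u)u = 0$ with
\[
A(t,y) := A_0 + y^p\,\partial_x, \qquad A_0 := \mathcal H\partial_x^2 + \alpha \mathcal H\partial_y^2, \qquad f \equiv 0.
\]
Conditions $(X)$, $(A_4)$ and $(f_1)$ then hold trivially, and the task reduces to verifying $(A_1)$--$(A_3)$.

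For $(A_1)$, the operator $A_0$ is skew-adjoint on $L^2$ because its Fourier symbol $i(\xi_1|\xi_1| + \alpha\xi_2^2\sgn\xi_1)$ is purely imaginary; an integration by parts yields
\[
\mathrm{Re}\,\langle y^p\partial_x v, v\rangle_{L^2} = -\tfrac{p}{2}\int_{\R^2} y^{p-1}y_x\,|v|^2\,dxdy,
\]
which by $H^{s-1}\hookrightarrow L^\infty$ (valid since $s>2$) is bounded below by $-\beta\,\ns{v}{0}^2$ with $\beta$ depending only on $\sup_W\ns{y}{s}$. The analogous computation for the formal adjoint together with the fact that $y^p\partial_x$ is $A_0$-bounded with relative bound zero (being first order against the non-elliptic second-order $A_0$) delivers via Lumer--Phillips that $A(t,y)\in G(L^2,1,\beta)$. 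For $(A_2)$, since $\Lambda^s$ and $A_0$ are both Fourier multipliers they commute, so
\[
B(t,y) = [\Lambda^s, y^p]\,\partial_x\,\Lambda^{-s} = \bigl([\Lambda^s, y^p]\,\Lambda^{-(s-1)}\bigr)\bigl(\Lambda^{s-1}\partial_x\Lambda^{-s}\bigr);
\]
the second factor is a bounded Fourier multiplier on $L^2$, and Kato's Proposition \ref{deskato} (with $\tilde s = 0$, $\tilde t = s-1$) controls the first factor by $c\,\ns{\nabla(y^p)}{s-1}$, itself bounded by a polynomial in $\ns{y}{s}$ thanks to Corollary \ref{Corol B2} and the algebra property of $H^{s-1}$. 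The Lipschitz dependence of $B$ on $y$ is obtained by the factorisation $y^p - z^p = (y-z)\sum_{j=0}^{p-1} y^{p-1-j}z^j$ and the same commutator estimate. Finally, $(A_3)$ reduces to
\[
\ns{(y^p-z^p)w_x}{0} \le \ns{y^p-z^p}{0}\,|w_x|_\infty \le C\,\ns{y-z}{0}\,\ns{w}{s},
\]
using $H^s\hookrightarrow L^\infty$ and the boundedness of $W$ in $Y$.

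Theorem \ref{tkato} then yields a unique $u\in C([0,T];H^s)\cap C^1((0,T);L^2)$, and the improved regularity $u_t\in C([0,T];H^{s-2})$ claimed in the statement is read directly off the equation since $A(t,u)u$ depends continuously on $u\in H^s$ as an element of $H^{s-2}$. Continuous dependence on $\phi$ is obtained by regularising $\phi\mapsto\phi_n := \rho_n*\phi\in H^\infty$ with a standard mollifier, verifying that the constants $\beta$ and $\lambda_i,\mu_i$ can be chosen uniformly in $n$ (they depend only on $\sup_n \ns{\phi_n}{s}\le C\ns{\phi}{s}$), and invoking the continuity assertion of Theorem \ref{tkato}. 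The step I expect to demand the most care is the surjectivity half of $(A_1)$: since $y^p\partial_x$ is not $L^2$-bounded and $A_0$ is not elliptic, one cannot appeal to Kato--Rellich on $L^2$ directly, and one must work on the natural graph-domain of $A_0$ and exploit the arbitrarily small relative bound of first-order operators against $A_0$ to close the Lumer--Phillips argument.
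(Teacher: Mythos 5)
Your proposal is correct in outline but takes a genuinely different route from the paper at the one step that matters. The paper does \emph{not} keep the dispersive part inside $A$: it first conjugates by the unitary group, setting $v=e^{t\mathcal H\Delta}u$, so that the linear term disappears and $A(t,v)=e^{t\mathcal H\Delta}(e^{-t\mathcal H\Delta}v)^p\partial_x e^{-t\mathcal H\Delta}$ is unitarily equivalent to the first--order operator $g^p\partial_x$ with $g=e^{-t\mathcal H\Delta}v\in H^s$. Condition $(A_1)$ then reduces to the classical generation result for first--order operators with $C^1$ coefficients (quasi--accretivity by integration by parts plus the standard range condition), and $(A_2)$, $(A_3)$ are verified by the same commutator estimate (Proposition \ref{deskato}) and the same $\|y^p-z^p\|$ bounds you use; at the end one transfers the solution back through the group. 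Your version avoids the conjugation (and makes $A$ independent of $t$, which trivializes the strong continuity in $(A_3)$), at the price of having to prove m--quasi--accretivity of $A_0+y^p\partial_x$ rather than of $y^p\partial_x$ alone.

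That price is where your argument has a real soft spot, and you correctly sense it. Your surjectivity argument rests on $y^p\partial_x$ being $A_0$--bounded with relative bound zero, which you attribute to ``first order against second order.'' But the relevant quantity is the modulus of the symbol of $A_0$, namely $|\xi^2+\alpha\eta^2|$: the estimate $|\xi|\le\epsilon|\xi^2+\alpha\eta^2|+C_\epsilon$ holds when $\alpha\ge 0$ (and then the Pazy/Lumer--Phillips perturbation theorem for a dissipative, relatively bounded perturbation of a skew--adjoint generator does close the argument), but for $\alpha<0$ the symbol degenerates on the cone $\xi^2=|\alpha|\eta^2$, along which $|\xi|$ is unbounded, so $\partial_x$ is not even $A_0$--bounded and the Kato--Rellich/Lumer--Phillips route collapses. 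The theorem places no sign restriction on $\alpha$, and the paper's conjugation handles every real $\alpha$ uniformly because $e^{-t\mathcal A}$ is unitary regardless of the sign. So either restrict to $\alpha\ge0$ and spell out the perturbation theorem you are invoking, or adopt the conjugation. The remaining items of your verification --- the commutator factorization for $(A_2)$ using Proposition \ref{deskato} with $\tilde s=0$, $\tilde t=s-1$, the Lipschitz estimates via $y^p-z^p=(y-z)\sum y^{p-1-j}z^j$, condition $(A_3)$, and the recovery of $u_t\in C([0,T];H^{s-2})$ from the equation --- match the paper's and are fine; the mollification detour for continuous dependence is unnecessary, since Theorem \ref{tkato} already delivers it.
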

\begin{proof}
 Without loss generality we can suposse $\alpha =1$. In this
  case, $u$ is solution to \eqref{p1} if and only if $v(t)=e^{t\mathcal
    H \Delta}u(t)$ is solution to
\begin{equation}\label{p2}
\left\{
  \begin{aligned}
    &\frac{dv}{dt}+A(t,v)v=0 \\
    &v(0)=\phi,
  \end{aligned}
\right.
\end{equation}
where
\[A(t,v)=e^{t\mathcal H \Delta}(e^{-t\mathcal H \Delta}
v)^p\partial_xe^{-t\mathcal H \Delta} .\] Let us see for this
problem that each one of the conditions of the Kato's theorem
(Theorem \ref{tkato}) is satisfied. For the moment, let $X=L^2({\R}^2)$ and
$Y=H^s({\R}^2)$, for $s>2$. It is clear that $S=(1-\Delta)^\frac s2$ is
an isomorphism between $X$ and $Y$. In the following lemmas we verify
that the problem \eqref{p2} satisfies the conditions $(A_1)$--$(A_4)$
of the Theorem \ref{tkato}.
\begin{lem}\label{lema2.1}
  $A(t,v)\in G(X,1,\beta(v))$, where $\beta(v)=\frac12
  {\sup_t\|\partial_x (e^{t\mathcal H \Delta} v)^p\|_{L^\infty(\re
      ^2)}}$ (see the condition $(A_1)$ before Theorem \ref{tkato}).
\end{lem}
\begin{proof}
  Since $\{e^{-t\mathcal H \Delta}\}$ is a strongly continuous group
  of unitary operators, and thanks to the observation immediately
  below of the condition $(A_1)$ of the Theorem \ref{tkato}, it
  follows the lemma.
\end{proof}
\begin{lem}\label{lema2.2}
 If $S=(1-\Delta)^{s/2}$, then 
$$
SA(t,v)S^{-1}=A(t,v)+B(t,v),
$$
where $B(t,v)$ is a bounded operator in $L^2$, for all $t\in \re$ and
$v\in H^s$, and satisfies the inequalities
\begin{align}
 \|B(t,v)\|_{B(X)}&\leq{\lambda}(v)\label{con2}\\
\|B(t,v)-B(t,v')\|_{B(X)}&\leq{\mu}(v, v')\|v'-v\|_s\label{con3},
\end{align}
for all $t\in\re $, and every $v$ and $v'\in H^s$, where
$\lambda(v)=\sup_t C_s \| \nabla (e^{-t\mathcal H \Delta} v)^p
\|_{s-1}$ and $\mu(v,v') =C_{p,s} (\|v\|_s^{p-1}+ \|v'\|_s^{p-1}).$
\end{lem}
\begin{proof}
From the Proposition \ref{deskato} follows that
\([S,(e^{-t\mathcal H \Delta}v)^p]\partial_xS^{-1} \in{B(X)}\) and
$$
\|[S,(e^{-t\mathcal H \Delta}v)^p]\partial_xS^{-1}\|_{B(X)}
\leq{C}_s\| \nabla(e^{-t\mathcal H \Delta}v)^p\|_{s-1}.
$$
Therefore, $B(t,v)\in B(X)$ and satisfies \eqref{con2}.
\par By proceeding as above and taking into account that  
\begin{equation}\label{ineq:11}
 \| v^p -w^p
\|_{s} \le C_{p,s} (\|u\|_s^{p-1} +\| v\|_s^{p-1})\|u-v\|_s, 
\end{equation}
for all
$u$ and $v\in H^s$, we can show \eqref{con3}.
\end{proof}
\begin{lem}\label{lema2.3}
  $ H^s({\R}^2)\subset{D}(A(t,v))$ and $A(t,v)$ is a bounded operator
  from $Y={H^s({ \R}^2)}$ to $X=L^2(\re^2)$ with $$\|A(t,v)\|_ {B(
    X,Y)}\le \lambda(v), $$ for all $v\in Y$, and where $\lambda $ is
  as in the Lemma \ref{lema2.2}. Also, the function $t\mapsto
  {A}(t,v)$ is strongly continuous from $\re$ to $B(Y,X)$, for all
  $v\in{H^s}$. Moreover, the function $v\mapsto{A}(t,v)$ satisfies
  the following Lipschitz condition
\[\|A(t,v)-A(t,v')\|_{B(Y,X)}\leq\mu(v, v')\|v-v'\|_X, \]
where $\mu$ is as in the lemma above.
\end{lem}
\begin{proof}
  Inasmuch as $e^{-t\mathcal H \Delta} =(e^{t\mathcal H \Delta})^{-1}$
  is an unitary operator in $X=L^2({\R}^2)$, from the definition of
  $A(t,v)$, it follows $H^s({\R}^2)\subset{D}(A(t,v))$. In fact, $$\|
  A(t,v) f\|_0 = \|e^{-t\mathcal H \Delta}v)^p\partial_x e^{t\mathcal
    H \Delta}f\|_0 \le C_s\| (e^{-t\mathcal H
    \Delta}v)^p\|_s\|\partial_x f\|_0 \le \lambda(v)\|f\|_s, $$ for
  all $f\in Y$. \par
  Now, for all $t,t'\in{\re}$ and all $f,v \in Y$, we have
\begin{align*}
\|A(t,v)f-A(t',v)f\|_0\leq & \left\| \left( e^{t\mathcal H \Delta}-
  e^{t'\mathcal H \Delta} \right) (e^{-t\mathcal H \Delta}
v)^p\partial_x (e^{t\mathcal H \Delta}f) \right\|_0+\\
&+ \left\| \left((e^{-t\mathcal H \Delta}v)^p-(e^{-t'\mathcal H
      \Delta}v)^p \right)\partial_x(e^{t\mathcal H \Delta}f) \right\|_0\\
&+\|(e^{-t'\mathcal H \Delta}v)^p\partial_x(e^{t\mathcal H \Delta}-e^{t'\mathcal H \Delta})f\|_0
\end{align*}
Since the group $\{e^{-t\mathcal H \Delta}\}_{t \in\re}$ is strongly
continuous and the function $v \to v^p$ from $Y$ itself is continuous,
$t\mapsto {A}(t,v)$ is strongly continuous from $\re$ to $B(Y,X)$.
\par Finally, for any $t\in \re$ we have
\begin{align*}
\|A(t,v')f-A(t,v)f\|_0&\leq\|(e^{t\mathcal H \Delta}v')^p-(e^{t\mathcal H \Delta}v)^p\|_0\|\partial_xe^{t\mathcal H \Delta}f\|_{\infty}\\
&\leq{C_p}(\| (e^{t\mathcal H \Delta}v)^{p-1}\|_{\infty} + \|
(e^{t\mathcal H \Delta}v')^{p-1}\|_{\infty}) \|f\|_s \|v'-v\|_0\\
&\leq\mu(v,v')\|v'-v\|_0\|f\|_s.
\end{align*}
This completes the proof of the lemma.
\end{proof}
The immediately preceding lemmas show that the problem \eqref{p2}
satisfies the Theorem \ref{tkato}~ conditions and, therefore, for each
$\phi$ $\in H^s$, $s>2$, there exist $T>0$, which depends on
$\|\phi\|_s$, and an unique $v \in C([0,T],H^s({\R}^2)\bigcap
C^1([0,T],$ $H^{s-1}({\R}^2))$ solution to problem \eqref{p2}.  Also,
the map $\phi\mapsto {v}$ is continuous from $H^s({\R}^2)$ to
$C([0,T],H^s({\R}^2)$. Now, from the properties of group
$Q(t)=e^{-t\mathcal{H}\Delta}$ can be verified that $u(t)=Q(t)v(t)$ is
solution to \eqref{p1} and satisfies the properties enunciated in
Theorem \ref{tp}.
\end{proof}
\begin{thm}\label{tt}
  The time of existence of the solution to \eqref{p1} can be chosen
  independently from $s$ in the following sense: if
  $u\in{C}([0,T],H^s)$ is the solution to \eqref{p1} with
  $u(0)=\phi\in{H^r}$, for some $r >{s}$, then
  $u\in{C}([0,T],H^r)$. In particular, if $\phi\in{H}^{\infty}$,
  $u\in{C}([0,T],H^{\infty}).$
\end{thm}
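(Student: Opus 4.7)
The plan is a standard persistence-of-regularity argument based on an a priori energy estimate in $H^r$ whose right-hand side depends only on the $H^s$-norm of the solution. Let $\phi\in H^r$ with $r>s>2$. Applying Theorem \ref{tp} at regularity $r$ produces a maximal time $T^\ast\in(0,\infty]$ and a unique solution $\tilde u\in C([0,T^\ast),H^r)$. Uniqueness at the $H^s$ level forces $\tilde u=u$ on $[0,\min(T,T^\ast))$, so it suffices to prove $T^\ast\ge T$, or equivalently that $\|u(t)\|_r$ stays bounded on $[0,T]$.

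The core of the argument is the energy inequality
\[
\frac{d}{dt}\|u(t)\|_r^2 \;\le\; F\!\left(\|u(t)\|_s\right)\|u(t)\|_r^2,
\]
for some continuous nondecreasing function $F$. To derive it I would apply $\Lambda^r$ to \eqref{p1} and pair with $\Lambda^r u$ in $L^2(\re^2)$. The dispersive operator $\mathcal H\partial_x^2+\alpha\mathcal H\partial_y^2$ has purely imaginary Fourier symbol (since $\mathcal H$ is skew-adjoint), so the linear terms contribute nothing to the energy balance. The nonlinear contribution splits as
\[
\langle \Lambda^r(u^p u_x),\Lambda^r u\rangle_{L^2}
=\langle [\Lambda^r,u^p]u_x,\Lambda^r u\rangle_{L^2}
+\langle u^p\,\Lambda^r u_x,\Lambda^r u\rangle_{L^2}.
\]
For the commutator piece, Proposition \ref{deskatoponce} together with the algebra bound $\|u^p\|_r\le C_p\|u\|_s^{p-1}\|u\|_r$ (iterated fractional Leibniz, valid since $H^s$ is an algebra for $s>1$) yields $\|[\Lambda^r,u^p]u_x\|_{L^2}\le C(\|u\|_s)\|u\|_r$. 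For the transport piece, integrating by parts gives $\langle u^p\,\Lambda^r u_x,\Lambda^r u\rangle=-\tfrac12\int (u^p)_x\,|\Lambda^r u|^2$, bounded by $\|(u^p)_x\|_{L^\infty}\|\Lambda^r u\|_{L^2}^2\le C(\|u\|_s)\|u\|_r^2$, thanks to the Sobolev embedding $H^s(\re^2)\hookrightarrow W^{1,\infty}(\re^2)$ available for $s>2$.

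Once the differential inequality is in place, the boundedness of $\|u(\cdot)\|_s$ on $[0,T]$ and Gronwall provide a uniform bound for $\|u(t)\|_r$ on $[0,T^\ast)\cap[0,T]$. The blow-up alternative inherent in Kato's theory (if $T^\ast<\infty$, then $\|u(t)\|_r\to\infty$ as $t\nearrow T^\ast$) then forces $T^\ast\ge T$, so $u\in C([0,T],H^r)$; iterating on $r$ covers the $H^\infty$ case.

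The main obstacle is the rigorous justification of the energy computation when only $u\in C([0,T],H^s)$ with $s<r$ is known a priori: without further regularity neither the time derivative $\frac{d}{dt}\|u\|_r^2$ nor the integration by parts is directly legitimate. I would address this either by Friedrichs mollification (run the estimate on $J_\epsilon u$ with $J_\epsilon=(1-\epsilon\Delta)^{-1/2}$, use commutator bounds uniform in $\epsilon$, and pass to the limit), or, more naturally within this paper's framework, by invoking the continuous-dependence clause of Theorem \ref{tkato}: take $\phi_n\in H^\infty$ with $\phi_n\to\phi$ in $H^r$, run the estimate on the smooth solutions $u_n$ with constants depending only on $\|u_n\|_s$, and transfer the uniform $H^r$-bound to $u$ by weak-$\ast$ compactness together with $H^s$-uniqueness. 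Continuity of $u$ into $H^r$ then follows by a standard Bona--Smith type argument using the already established continuous dependence in $H^s$.
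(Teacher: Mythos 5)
Your argument is correct in outline, but it follows a genuinely different route from the paper's. The paper adopts Kato's linearization device (part (c) of Theorem 1 in \cite{katoMM1979}): after conjugating by the group $e^{t\mathcal H\Delta}$, it applies $\partial_x^2$, $\partial_x\partial_y$ and $\partial_y^2$ to the equation, views each second derivative $w$ of $v=e^{t\mathcal H\Delta}u$ as solving a \emph{linear} nonautonomous equation $w'+A(t)w+B(t)w=f(t)$ whose coefficients involve only the already-known $H^s$ solution, invokes the existence of the evolution family $U(t,\tau)$ for $\{A(t)\}$ in the scale $H^h$, $-s\le h\le s-2$, and solves the resulting Volterra integral equation by successive approximations in $H^{r-2}$; this gains regularity one unit at a time ($r\le s+1$) and then bootstraps. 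You instead prove the a priori differential inequality $\frac{d}{dt}\|u\|_r^2\le F(\|u\|_s)\|u\|_r^2$ via the Kato--Ponce commutator estimate and integration by parts on the transport term, close with Gronwall, and conclude by the continuation/blow-up alternative. Your route is more direct, treats all $r>s$ in a single stroke, and is the standard modern argument; its cost is exactly the issue you flag, namely justifying the formal energy computation when $u$ is a priori only continuous into $H^r$, which you correctly propose to handle by Friedrichs mollification with $\epsilon$-uniform commutator bounds or by approximating the data in $H^\infty$ and passing to the limit using the continuous-dependence clause of Theorem \ref{tkato}. The paper's route sidesteps mollification entirely (the Volterra equation is solved directly in $H^{r-2}$ and continuity in time comes for free), at the price of requiring the evolution-family machinery of Lemma \ref{un} and the extra bootstrap for $r>s+1$. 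One small economy available to you: since Theorem \ref{tp} applied at level $r$ already yields $\tilde u\in C([0,T^\ast),H^r)$ and your a priori bound extends $T^\ast$ past $T$, continuity of $u$ into $H^r$ is automatic from the construction, so the closing Bona--Smith step is not actually needed.
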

\begin{proof}
  The proof of this result is essentially the same as part $(c)$ of
  the Theorem 1 in \cite{katoMM1979}. We will briefly outline this.  Let
  $r>s$, $u\in{C}([0,T],H^s)$ be the solution to \eqref{p1} and
  $v=e^{t\mathcal H \Delta}u$. Let us suposse that $r\le
  s+1$. Applying $\partial_x^2$ in both sides of the differential
  equation in \eqref{p2}, we arrive at the following linear evolution
  equation in $w(t)=\partial_x^2v(t)$,
\begin{equation}\label{ger}
\frac{dw}{dt}+A(t)w+B(t)w=f(t)
\end{equation}
where
\begin{align}\label{v1}
A(t)&=\partial_xe^{t\mathcal H \Delta}(u(t))^pe^{-t\mathcal H \Delta}\\
\label{ta}B(t)&=2e^{t\mathcal H \Delta}[p(u(t))^{p-1}]u_x(t)e^{-t\mathcal H \Delta}\\
\label{ja}f(t)&=-e^{t\mathcal H \Delta}[p(p-1)u^{p-2}(t)][u_x(t)]^3.
\end{align}

Since $v\in{C}([0,T),H^s)$ we have that
$w\in{C}([0,T);H^{s-2})$. Also, $w(0)=\phi_{xx}\in{H}^{r-2}$, because
$\phi\in{H}^r$. Let us prove that $w\in{C}([0,T],H^{r-2})$. To do
this, we shall prove that the Cauchy problem associated to the linear
equation lineal (\ref{ger}) is well-posed for $1-s\leq{k}\leq{s}-1.$
In this direction we have the following lemma whose proof is
completely similar to that of Lemma 3.1 in \cite{katoMM1979}.
\begin{lem}\label{un}
  The  family $\{A(t)\}_{0\leq{t}\leq{T}}$ has an unique family of
  evolution  operators $U(t,\tau)_{0\leq{t}\leq\tau\le {T}}$ in the
  spaces $X=H^h,\quad {Y}=H^k$ (in the Kato sense), where
 \begin{align}
 -s\leq{h}\leq{s}-2\quad 1-s\leq{k}\leq{s}-1\quad {k}+1\leq{h}
\end{align}
In particular, $U(t,\tau):H^r\rightarrow{H}^r$ for
$-s\leq{r}\leq{s}-1.$
\end{lem}
Then, the last lemma allows us to show that  $w$
satisfies the equation
\begin{equation}\label{vol}
w(t)=U(t,0)\phi_{xx}+\int_0^tU(t,\tau)[-B(\tau)w(\tau)+f(\tau)]d\tau.
\end{equation}
Now, since $w(0)=\phi_{xx}\in{H}^{r-2}$, by \eqref{ja}, $f$ is in
${C}([0,T],H^{s-1})\subset{C}([0,T],H^{r-2})$ (if $r\leq{s+1}$) and
${B}(t)$, given in (\ref{ta}), is a family of operators in
$\mathcal{B}(H^{r-2})$ strongly continuous for $t$ in the interval
$[0,T]$ (if $r\leq{s}+1$), from Lemma \ref{un}, the solution to
\eqref{vol} is in ${C}([0,T],H^{r-2})$ (\eqref{vol} is an integral
equation of Volterra type in $H^{r-2}$, which can be solved by
successive approximations), in others words, $\partial_x ^2u
\in{C}([0,T],H^{r-2})$. \par If $w_1(t)=\partial_{x}\partial_y v(t)$,
we have
\begin{equation}\label{ger2}
\frac{dw_1}{dt}+A(t)w_1+B_1(t)w_1=f_1(t)
\end{equation}
where
\begin{align}
\label{ta1}B_1(t)&=e^{t\mathcal H \Delta}[p(u(t))^{p-1}]u_x(t)e^{-t\mathcal H \Delta}=\frac12B(t)\\
\label{ja1}f_1(t)&=-e^{t\mathcal H
  \Delta}((p(p-1)u^{p-2}(t)[u_x(t)]^2 + p(u(t))^{p-1}u_{xx}(t))u_y(t)).
\end{align}
As above, we have
\begin{equation}\label{vol1}
w_1(t)=U(t,0)\phi_{xy}+\int_0^tU(t,\tau)[-B_1(\tau)w_1(\tau)+f_1(\tau)]d\tau.
\end{equation} 
Inasmuch as $u_{xx} \in C([0,T], H^{r-2})$, $f_1\in
{C}([0,T],H^{r-2})$. Since, also, $B_1(t)\in
\mathcal{B}(H^{r-2})$ is strongly  continuous in the interval $[0,T]$,
arguing as before, we have that $w_1\in C([0,T],
H^{r-2})$ or, equivalently, $u_{xy}\in C([0,T], H^{r-2})$\par
Analogously, if $w_2(t)=\partial_y^2 v(t)$, we have
\begin{equation}\label{ger3}
\frac{dw_2}{dt}+A(t)w_2=f_2(t)
\end{equation}
where
\begin{align}
\label{ja2}f_2(t)&=-e^{t\mathcal H
  \Delta}((p(p-1)u^{p-2}(t)u_x(t)u_y(t) + 2p(u(t))^{p-1}u_{xy}(t))u_y(t)).
\end{align}
Then,
\begin{equation}\label{vol2}
w_2(t)=U(t,0)\phi_{yy}+\int_0^tU(t,\tau)f_2(\tau)d\tau.
\end{equation} 
Since $u_{xy} \in C([0,T], H^{r-2})$, $f_2\in
{C}([0,T],H^{r-2})$. Repeating the argument above, we can conclude
that $w_1\in C([0,T], H^{r-2})$ or, equivalently, $\partial_y^2u\in
C([0,T], H^{r-2})$\par Then, we have proved that, if $s<r\le s+1$ and
$\phi \in H^r$, $u \in C([0,T], H^{r})$. To the case $r>s+1$, as
$\phi\in H^{s'}$, for $s'<r$, using a bootstrapping argument can be
shown that $u \in C([0,T], H^{r})$.
\end{proof}
Now we examine the local well-posedness of \eqref{BOeq} in
$X^s(\re^2)$ without any restriction on the parameters.
\begin{thm}\label{tp1}
  Let $s>2$ and $p\in{\nat}$. For $\phi\in{X^s({\R}^2)}$, there exist
  $T>0$, that depends only on $\|\phi\|_s$, and a unique
  $u\in{C([0,T],X^s({\R}^2))\cap{C}^1([0,T],H^{s-2}({\R}^2))}$
  solution to the Cauchy problem
 \begin{equation}\label{p3}
\left\{
\begin{aligned}
&u_t+\mathcal H\partial_x^2u +\alpha\mathcal H\partial_y^2u
-\gamma \partial^{-1}_x \partial_y^2u +u^pu_x=0  \\
 &u(0)=\phi .
 \end{aligned}
 \right.
\end{equation}
Furthemore, the map $\phi \mapsto u$ from $X^s$ to $C([0,T], X^s)$
is continuous.
\end{thm}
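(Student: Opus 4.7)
My strategy is to apply Kato's theorem (Theorem \ref{tkato}) along exactly the template used for Theorem \ref{tp}, replacing the group $e^{-t\mathcal H \Delta}$ by the group $Q(t) = e^{-tL}$ generated by the full skew-symmetric linear part
\begin{equation*}
L \;=\; \mathcal H \partial_x^2 \,+\, \alpha\, \mathcal H \partial_y^2 \,-\, \gamma\, \partial_x^{-1} \partial_y^2.
\end{equation*}
Since the symbol of $L$ is purely imaginary, $Q(t)$ is a Fourier multiplier of modulus one, hence a strongly continuous unitary group on $L^2(\mathbb R^2)$ and an isometry on $H^s(\mathbb R^2)$ for every $s\ge 0$. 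The crucial additional point, not needed in Theorem \ref{tp}, is that $Q(t)$ preserves $X^s$: because $Q(t)$ commutes with $\partial_x$ (both being Fourier multipliers), any $u = \partial_x g\in X^s$ satisfies $Q(t)u = \partial_x(Q(t)g)\in X^s$.

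Next, the change of variable $v(t) = e^{tL}u(t)$ reduces \eqref{p3} to
\begin{equation*}
\frac{dv}{dt} + A(t,v)\,v = 0, \qquad v(0) = \phi,
\end{equation*}
with $A(t,v) = e^{tL}(e^{-tL}v)^p \partial_x e^{-tL}$. This is formally identical to problem \eqref{p2}, so verification of Kato's hypotheses $(A_1)$--$(A_4)$ with $X = L^2(\mathbb R^2)$, $Y = H^s(\mathbb R^2)$ and $S = (1-\Delta)^{s/2}$ proceeds verbatim via the analogues of Lemmas \ref{lema2.1}--\ref{lema2.3}, since those estimates only use that the group is unitary on $L^2$ and isometric on $H^s$. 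Kato's theorem then yields a unique $v\in C([0,T],H^s)\cap C^1([0,T],H^{s-2})$, and $u(t) := e^{-tL}v(t)$ inherits the same regularity and solves \eqref{p3}.

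To upgrade the conclusion from $H^s$ to $X^s$, I would rewrite the nonlinearity in divergence form, $u^p u_x = \tfrac{1}{p+1}\partial_x(u^{p+1})$, so that Duhamel's formula becomes
\begin{equation*}
u(t) = e^{-tL}\phi \;-\; \frac{1}{p+1}\int_0^t e^{-(t-\tau)L}\,\partial_x\!\left[u(\tau)^{p+1}\right]d\tau.
\end{equation*}
By standard Moser-type estimates, $u(\tau)^{p+1}\in H^s$ whenever $u(\tau)\in H^s$, so the integrand has the form $\partial_x(\cdot)$ with argument in $H^s$ and hence lies in $X^s$; combined with $\phi\in X^s$ and the invariance of $X^s$ under $e^{-(t-\tau)L}$ established above, this forces $u(t)\in X^s$ for every $t\in[0,T]$. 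Continuity of the flow in $X^s$ and continuous dependence on the initial datum are inherited from the $H^s$ conclusions because $X^s$ carries the $H^s$-norm.

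The main obstacle I anticipate is the singular Fourier symbol $\gamma\eta^2/\xi$ of $\partial_x^{-1}\partial_y^2$, which is unbounded near $\xi=0$ and would prevent a direct $H^s$-boundedness of $L$ itself. The argument sidesteps this by keeping the singular factor only inside the unitary exponential $e^{-tL}$ (where it produces a bounded, modulus-one multiplier) and by placing the nonlinearity in divergence form, so that $\partial_x^{-1}$ is never applied to a function that does not already lie in $X^s$.
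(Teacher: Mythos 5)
Your proposal is correct and follows essentially the same route as the paper: the same full linear group $e^{-t\mathcal A}$ with $\mathcal A=\mathcal H\partial_x^2+\alpha\mathcal H\partial_y^2-\gamma\partial_x^{-1}\partial_y^2$, the same reduction to Kato's theorem via analogues of Lemmas \ref{lema2.1}--\ref{lema2.3}, and the same divergence-form Duhamel identity $\partial_x^{-1}u=e^{-t\mathcal A}\partial_x^{-1}\phi+\int_0^t e^{-(t-\tau)\mathcal A}\bigl(u^{p+1}(\tau)/(p+1)\bigr)\,d\tau$ to transfer the solution from $H^s$ to $X^s$. Your explicit remarks on the unitarity of the multiplier despite the singular symbol $\eta^2/\xi$ and on the commutation of the group with $\partial_x$ are points the paper leaves implicit, but the argument is the same.
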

\begin{proof}
  The proof is basically the same as the Theorem \ref{tp}. Let
  $\mathcal A =\mathcal H\partial_x^2 +\alpha\mathcal H\partial_y^2
  -\gamma \partial^{-1}_x \partial_y^2$. It is easy to check that
  $\mathcal A$ generates a strongly continuous group in
  $H^s$. Therefore, the local well-posedness in $H^s$ of the Cauchy
  problem
 \begin{equation}\label{p4}
\left\{
  \begin{aligned}
    &\frac{dv}{dt}+A(t,v)v=0 \\
    &v(0)=\phi,
  \end{aligned}
\right.
\end{equation}
where
\[A(t,v)=e^{t\mathcal A }(e^{-t\mathcal A} v)^p\partial_xe^{-t\mathcal
  A } ,\] follows from lemmas completely analogous to the Lemmas
\ref{lema2.1}, \ref{lema2.2} and \ref{lema2.3} with which we proved
the local well-posedness of the Cauchy problem \eqref{p2}.\par Now,
let $v$ be the solution to Cauchy problem \eqref{p4} and
$u=e^{-t\mathcal A}v$. Let us prove that if $\phi\in X^s$, $u\in
C([0,T],X^s({\R}^2))$ and is solution to \eqref{p3}. From \eqref{p4}
it can be easily proved that 
\begin{equation}
\begin{aligned}\label{eq2.22}
u&=e^{-t\mathcal A}\phi+\int_0^t
e^{-(t-\tau)\mathcal A}\partial_x\left( \frac {u^{p+1}(\tau)}{p+1} \right)
d\tau\\
&=e^{-t\mathcal A}\phi+\partial_x\int_0^t
e^{-(t-\tau)\mathcal A}\left( \frac {u^{p+1}(\tau)}{p+1} \right)
d\tau.
\end{aligned}
\end{equation}
Indeed, $u\in C([0,T],H^s({\R}^2)) $ is solution to the last equation
if only if $v=e^{t\mathcal A}u$ is solution to \eqref{p4}.  Since
$H^s$ is a Banach algebra, $t\mapsto u^{p+1}(t)$ is continuous from
$[0,T]$ to $H^s$. In particular, $\int_0^t e^{-(t-\tau)\mathcal
  A}\left( {u^{p+1}(\tau)} \right) d\tau $ is a continuous function in
$t$ with values in $H^s$. Hence, if $\phi \in X^s$,
$$
\partial_x^{-1} u=e^{-t\mathcal A}\partial_x^{-1}\phi+\int_0^t
e^{-(t-\tau)\mathcal A}\left( \frac {u^{p+1}(\tau)}{p+1} \right)
d\tau\in C([0,T],H^s).
$$
Therefore $u\in C([0,T],X^s({\R}^2))$ and $u$ is solution to
\eqref{p3}.  Also, by \eqref{ineq:11}
$$
\begin{aligned}
  \sup_{t\in [0,T]} \|\partial_x^{-1} (u-\tilde u)(t)\|_s\le&
  \|\partial_x^{-1} (\phi- \tilde \phi)\|_s+ \\ &+ C_{p,s} \sup_{t\in
    [0,T]} (\|u\|_s^{p-1} +\| \tilde u\|_s^{p-1})\sup_{t\in
    [0,T]}\|(u-\tilde u)(t)\|_s,
\end{aligned}
$$
where $\tilde \phi\in X^s$ and $\tilde u\in C([0,T],X^s({\R}^2)) $ is solution to 
$$
\tilde u=e^{-t\mathcal A}\tilde\phi+ \int_0^t e^{-(t-\tau)\mathcal
  A}\partial_x\left( \frac {\tilde u^{p+1}(\tau)}{p+1} \right) d\tau.
$$ 
Therefore, the local well-posedness of \eqref{p3} is equivalent to the
local well-posedness of \eqref{p4}. This finishes the proof. 
\end{proof} 
The following theorem is totally analogous to the Theorem \ref{tt}
\begin{thm}\label{tt1}
  The time of existence of the solution to \eqref{p3} can be chosen
  independently from $s$ in the following sense: if
  $u\in{C}([0,T],X^s)$ is the solution to \eqref{p3} with
  $u(0)=\phi\in{X^r}$, for some $r >{s}$, then
  $u\in{C}([0,T],X^r)$. In particular, if $\phi\in{X}^{\infty}$,
  $u\in{C}([0,T],X^{\infty}).$
\end{thm}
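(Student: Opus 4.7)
The plan is to follow the same scheme as Theorem \ref{tt} but with the skew-symmetric operator $\mathcal A=\mathcal H\partial_x^2+\alpha\mathcal H\partial_y^2-\gamma\partial_x^{-1}\partial_y^2$ in place of $-\mathcal H\Delta$, and then to promote the resulting $H^r$-regularity to $X^r$ via the integral equation \eqref{eq2.22} already established in the proof of Theorem \ref{tp1}. Since the Fourier symbol of $\mathcal A$ is purely imaginary, $\{e^{-t\mathcal A}\}_{t\in\re}$ is a strongly continuous group of isometries on every $H^m$ and every $X^m$, and this is the only feature of the free evolution used in the Kato-type bootstrap argument.

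Take $r>s$ with $r\le s+1$; the general case follows by a finite bootstrap in steps of size at most one. Setting $v(t)=e^{t\mathcal A}u(t)$, the function $v$ solves \eqref{p4}, and applying $\partial_x^2$, $\partial_x\partial_y$, $\partial_y^2$ in turn yields three linear evolution equations for $w=\partial_x^2 v$, $w_1=\partial_x\partial_y v$ and $w_2=\partial_y^2 v$ of exactly the form \eqref{ger}, \eqref{ger2}, \eqref{ger3}, with leading operator $A(t)=\partial_x e^{t\mathcal A}u^p e^{-t\mathcal A}$ and with lower-order multiplicative operators and forcings obtained from \eqref{ta}--\eqref{ja}, \eqref{ta1}--\eqref{ja1} and \eqref{ja2} upon replacing $e^{\pm t\mathcal H\Delta}$ by $e^{\pm t\mathcal A}$. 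The statement of Lemma \ref{un} transfers verbatim, because its proof in \cite{katoMM1979} rests only on the commutator bound of Proposition \ref{deskato} together with the isometric character of the free group; neither ingredient is affected by the addition of $-\gamma\partial_x^{-1}\partial_y^2$, whose symbol is purely imaginary and commutes with $\Lambda^s$. Solving the associated Volterra integral equations in $H^{r-2}$ successively for $w$, then $w_1$ (once $\partial_x^2 u\in C([0,T],H^{r-2})$ is known), and finally $w_2$, one concludes that $v\in C([0,T],H^r)$, and hence $u=e^{-t\mathcal A}v\in C([0,T],H^r)$.

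To upgrade to $X^r$, I would invoke \eqref{eq2.22} in the form
\[
\partial_x^{-1}u(t)=e^{-t\mathcal A}\partial_x^{-1}\phi+\int_0^t e^{-(t-\tau)\mathcal A}\frac{u^{p+1}(\tau)}{p+1}\,d\tau.
\]
Since $\phi\in X^r$, one has $\partial_x^{-1}\phi\in H^r$, so the first term lies in $C([0,T],H^r)$ by the unitarity of $e^{-t\mathcal A}$ on $H^r$. Since $u\in C([0,T],H^r)$ with $r>2$ and $H^r$ is a Banach algebra in that range, $\tau\mapsto u^{p+1}(\tau)$ is continuous with values in $H^r$, so the Bochner integral also lies in $C([0,T],H^r)$. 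Thus $\partial_x^{-1}u\in C([0,T],H^r)$, which is precisely $u\in C([0,T],X^r)$. The special case $\phi\in X^\infty$ follows by applying the statement for every integer $r$.

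The only non-routine verification is the transfer of Lemma \ref{un} to the new dispersive operator $\mathcal A$; that is the sole place where the free evolution enters structurally, and the argument survives because the extra term $-\gamma\partial_x^{-1}\partial_y^2$ is a skew-adjoint Fourier multiplier that commutes with $\Lambda^s$ and so does not disturb the commutator estimates underlying Kato's evolution construction. Everything thereafter is Volterra iteration in the Sobolev scale followed by a Banach-algebra argument on the Duhamel representation of $\partial_x^{-1}u$.
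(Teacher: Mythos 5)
Your proposal is correct and follows essentially the same route as the paper: repeat the bootstrap of Theorem \ref{tt} with $\mathcal H\Delta$ replaced by $\mathcal A$ to obtain $u\in C([0,T],H^r)$, then read off $u\in C([0,T],X^r)$ from the Duhamel formula for $\partial_x^{-1}u$. The paper states this in two sentences, while you supply the supporting details (isometry of the group $e^{-t\mathcal A}$, transfer of Lemma \ref{un}, the Banach-algebra argument for the integral term); these are consistent with the paper's intent.
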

\begin{proof}
  Suposse $u\in{C}([0,T],X^s)$ is the solution to \eqref{p3} with
  $u(0)=\phi\in{X^r}$ with $r>s$. To see that $u\in{C}([0,T],H^r)$, we
  repeat the same arguments that we used in the proof of Theorem
  \ref{tt}, it is just to replace the operator $\mathcal H\Delta$ with
  $\mathcal A$, the operator defined in the proof of the immediately
  above theorem. Since $$
  \partial_x^{-1} u=e^{-t\mathcal A}\partial_x^{-1}\phi+\int_0^t
  e^{-(t-\tau)\mathcal A}\left( \frac {u^{p+1}(\tau)}{p+1} \right)
  d\tau,
$$
we have that   $u\in{C}([0,T],X^{r}).$   
\end{proof}
\section{Local theory in weigthed Sobolev spaces}\label{sec4}
In this section we shall examine the local well-posedness of the
Cauchy problem \eqref{BOeq} in some weigthed Sobolev spaces. We use
ideas developed in \cite{iorio1986}, \cite{iorio2} and
\cite{aniura2003}.\par
First, we consider the case $\gamma=0$.
\begin{thm}\label{teorpeso1}
  Assume that $w$ is a weight with its first and second derivatives
  bounded and, for some $\lambda^*$, there exist $C_{\lambda}>0$ such that
\[|w(x,y)|\leq{C}_{\lambda}{e}^{\lambda(x^2+y^2)}, \]
for all $(x,y)\in{\R}^2$ and all  $\lambda\in(0,\lambda^*)$. Let 
$${X}^s(w^2)= \{ f\in X^s \, |\, wf\in L^2 \}.$$ This is a Hilbert space
with the inner product $\langle \cdot, \cdot \rangle_{w,s} =\langle
\cdot, \cdot \rangle_{X^s} +\langle \cdot, \cdot \rangle_{L^2(w^2)}$.
Then, for $s>2$, the Cauchy problem \eqref{p1} is local well-posed in
$X^s(w^2)$.
\end{thm}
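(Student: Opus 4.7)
The plan is to combine the $X^s$ well-posedness of Theorem \ref{tp1} (specialised to $\gamma=0$) with a weighted $L^2$ energy estimate in the spirit of I\'orio and Milan\'es. Given $\phi\in X^s(w^2)$, Theorem \ref{tp1} already provides a unique $u\in C([0,T],X^s)\cap C^1([0,T],H^{s-2})$ solving \eqref{p1}, so what remains is to show that $wu(t)\in L^2$ for every $t\in[0,T]$ and that the map $\phi\mapsto u$ is continuous from $X^s(w^2)$ into $C([0,T],X^s(w^2))$.

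To avoid any a priori integrability difficulty, I would introduce a truncated family of weights $w_N=w/(1+N^{-1}w)$ with $w_N\nearrow w$ pointwise, each $w_N$ bounded, and $|\partial^\alpha w_N|$ uniformly bounded in $N$ for $|\alpha|\le 2$ by constants depending only on $\|\partial^\alpha w\|_\infty$. Multiplying \eqref{p1} by $w_N^2 u$ and integrating in $\re^2$ I obtain
\[
\tfrac12\tfrac{d}{dt}\|w_N u\|_0^2 = -\int w_N^2 u\,\mathcal H\partial_x^2 u\,dx\,dy - \alpha\int w_N^2 u\,\mathcal H\partial_y^2 u\,dx\,dy - \tfrac{1}{p+1}\int w_N^2 (u^{p+1})_x\,dx\,dy.
\]
The nonlinear integral, after integration by parts in $x$, is bounded by $C(\|u\|_s)(1+\|w_N u\|_0^2)$, using the Sobolev embeddings $H^s\hookrightarrow L^\infty\cap L^{2p}$ and the uniform bound on $(w_N)_x$.

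For each dispersive operator $P\in\{\mathcal H\partial_x^2,\mathcal H\partial_y^2\}$ I then write
\[
\int w_N^2 u\,Pu\,dx\,dy = \int (w_N u)\,P(w_N u)\,dx\,dy + \int (w_N u)\,[w_N,P]u\,dx\,dy,
\]
and the first integral vanishes by antisymmetry of $P$ in $L^2(\re^2)$. Expanding $[w_N,\mathcal H\partial_x^2]u=[\mathcal H,w_N]\partial_x(\partial_x u)+2\mathcal H((w_N)_x\partial_x u)+\mathcal H((w_N)_{xx}u)$ and applying Calder\'on's commutator theorem slice by slice in $y$ gives an $L^2$ bound by $C(\|w_x\|_\infty+\|w_{xx}\|_\infty)\|u\|_s$. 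The analogous expansion for $[w_N,\mathcal H\partial_y^2]u$ produces the delicate term $[\mathcal H,w_N]\partial_y^2 u$; here the hypothesis $u\in X^s$ is essential, because I can then write $u=\partial_x h$ with $h\in H^s$, so $\partial_y^2 u=\partial_x(\partial_y^2 h)$, and Calder\'on's theorem applied in the $x$-variable yields $\|[\mathcal H,w_N]\partial_y^2 u\|_0\le C\|w_x\|_\infty\|h\|_s\le C\|w_x\|_\infty\|u\|_{X^s}$.

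Collecting everything I obtain $\frac{d}{dt}\|w_N u(t)\|_0^2\le C(w,\sup_{[0,T]}\|u\|_{X^s})(1+\|w_N u(t)\|_0^2)$ with $C$ independent of $N$. Gronwall then produces a uniform-in-$N$ bound, and Fatou yields $wu\in L^\infty([0,T],L^2)$; time continuity $t\mapsto wu(t)\in L^2$ follows by combining weak continuity with continuity of the norm (the right-hand side of the energy identity is continuous in $t$ thanks to $u\in C([0,T],X^s)$). Continuous dependence is obtained by running the same energy estimate on the difference $u_n-u$ of two solutions whose data converge in $X^s(w^2)$. The main obstacle is the treatment of $[w_N,\mathcal H\partial_y^2]u$: Calder\'on's theorem is intrinsically one-dimensional along the direction of $\mathcal H$, so to handle $[\mathcal H,w_N]\partial_y^2 u$ we must convert a $y$-derivative into an $x$-derivative, which is precisely what the condition $u\in X^s$ permits.
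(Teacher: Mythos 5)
Your proposal is correct and follows essentially the same route as the paper: truncate the weight (the paper uses $w_\lambda = w e^{-\lambda(x^2+y^2)}$, exploiting the Gaussian growth hypothesis, rather than your $w_N$, but both have derivatives bounded uniformly in the truncation parameter), run the weighted $L^2$ energy estimate in which the antisymmetric parts of $\mathcal H\partial_x^2$ and $\mathcal H\partial_y^2$ drop out, and control the key commutator $[\mathcal H, w]\partial_y^2 u$ via Calder\'on's theorem after writing $\partial_y^2 u = \partial_x(\partial_x^{-1}\partial_y^2 u)$, which is exactly where $u\in X^s$ enters in the paper as well. Gronwall plus monotone convergence (your Fatou step) and the analogous estimate on differences of solutions complete the argument just as in the paper.
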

\begin{proof} In this  proof we use following lemma.
\begin{lem}
  For $w$ as in the theorem. Let $w_{\lambda}(x,y)=w(x,y)
  e^{-{\lambda}(x^2+y^2)}$. there exist constants $c_1,\ c_2,\
  c_3$ and $c_4$ independient of $\lambda$ and such that
 \[|\nabla{w}_{\lambda}|_{\infty}\leq{c}_1|\nabla{w}|_{\infty}+c_2\] and
 \[|D^{\alpha}w_{\lambda}|_{\infty}\leq{c}_3|\nabla{w}|_{\infty}+
 |D^{\alpha}w|_{ \infty}+c_4, \]
 for any  multindex $\alpha=(\alpha_1,\alpha_2)$ with $|\alpha|=2$.
\end{lem}
In view of the local well posedness in $X^s$, it is enough with
examining some estimates of $L^2(w^2)$ norm. Well, with this purpose
let $w_{\lambda}(x,y)=w(x,y)e^{-{\lambda}(x^2+y^2)}.$
It is clear that $||w_{\lambda}u(t)||_0<{\infty}$ and $||w_{\lambda}
u_t(t)||_0< {\infty}$, for all ${t}\in[0,T]$ and all $\lambda
>0$. Hence, multiplying on both sides of the equation \eqref{p1}
by $w_{\lambda}^2 u$ and integrating we obtain
\[\frac{1}{2}{\frac{d}{dt}}||w_{\lambda}u||_0^2= \langle{w}_{\lambda}u
,w_{\lambda} \big(-\mathcal H^{(x)}\prx^2u -\alpha \mathcal
H^{(x)}\pry^2u 
-u^pu_x
\big)\rangle_0.
\] 
The first two terms in the sum on the right hand of the last equation
satify
\begin{align}
\langle{w}_{\lambda}u,w_{\lambda}\mathcal
H^{(x)}\partial_x^2{u}\rangle_0&=
\langle{w}_{\lambda}u,[w_{\lambda},\mathcal
H^{(x)}]\partial_x^2 u\rangle_0+ \langle{w}_{\lambda}u,\mathcal
H^{(x)} [w_{\lambda}, \partial_x^2]u\rangle_0\notag\\
\langle{w}_{\lambda}u,w_{\lambda}\mathcal H^{(x)}\partial_y^2{u}\rangle_0&=
\langle{w}_{\lambda}u,[w_{\lambda}\mathcal H^{(x)}]\partial_y^2u\rangle_0+\langle{w}_{\lambda}u,\mathcal H^{(x)}[w_{\lambda},\partial_y^2]u\rangle_0\notag
\end{align}
The  Cauchy-Schwarz inequality, the  Calderón's
commutator theorem and the lemma above  imply that
\begin{equation*}
\begin{aligned}
  \langle{w}_{\lambda}u,[w_{\lambda},\mathcal H^{(x)}]\partial_y^2u\rangle_0&\leq\|w_{\lambda}u\|_0\|[w_{\lambda},\mathcal H^{(x)}]\partial^2_yu\|_0\\
  &{\leq}{C}_1 |\partial_x w_{\lambda}| _{\infty} \|w_{\lambda}u\|_  0
  \|\partial_x^{-1}\partial_y^2u \|_0\\
  &\leq{C}_2\|w_{\lambda}u\|_0\|u\|_{{X}^s}.
\end{aligned}
\end{equation*}
On the other hand,
\begin{align*}
\langle{w}_{\lambda}u,\mathcal
H^{(x)}[w_{\lambda},\partial_y^2]u\rangle_0&\leq\|w_{\lambda}u\|_0\|[w_{\lambda},\partial_y^2]u\|_0\\
& \leq C_1\|w_{\lambda}u\|_0\left( |\partial_y^2w_{\lambda}|_\infty\|u\|_0+
  2|\partial_yw_{\lambda}|_\infty\|\partial_yu\|_0 \right)\\ &\leq C_2 \|w_{\lambda}u\|_0\|u\|_{{X}^s}.
\end{align*}
In an entirely similar way we obtain  
\[\langle{w}_{\lambda}u,[w_{\lambda},\mathcal H^{(x)}]\partial_x^2u\rangle_0\leq{C}\|w_{\lambda}u\|_0\|u\|_{{X}_s}\]
and
\[\langle{w}_{\lambda}u,\mathcal H^{(x)}[w_{\lambda}, \partial_x^2]
u\rangle_0 \leq{C}\|w_{\lambda}u \|_0\|u\|_{{X}_s}.
\] 
Also,
\[\|w_{\lambda}u^pu_x\|_0\leq|u^{p-1}u_x|_{\infty}\|w_{\lambda}u\|_0
\leq{C}_s \|w_{\lambda}u\|_0.\] With the help of the estimates above
we can infer
\[\frac{d}{dt}\|w_{\lambda}u\|_0^2\leq{A}\|u\|_{X^s}^2+B\|w_{\lambda}u\|_0^2,\]
where $A$ and $B$ are constants that do not depend on $\lambda$.
From the Gronwall inequality it is concluded that
\[\|w_{\lambda}u\|_0^2\leq{e}^{BT}(\|w_{\lambda}\phi\|_0^2+TA\|u\|_{X^s}^2).\]
Thanks to the Lebesgue's monotone convergence, it follows that
\[\|wu\|_0^2\leq{e}^{BT}(\|w\phi\|_0^2+TA\|u\|_{X^s}^2)\]
Therefore, $u(t) \in {X}_s(w^2),$ for all ${t}\in [0,T].$
By proceeding in the same way it is deduced that
\[\|w(u-v)\|_0^2\leq{e}^{BT}(\|w(\phi-\psi)\|_0^2+TA\|u-v\|_{X^s}^2),\]
where $ \psi\in X^s(w^2)$ and $v$ is the solution to \eqref{p1}, with
$\psi$ instead of $\phi$. Remains to be seen that $t\mapsto u(t)$
is continuous from $[0,T]$ in $X^s(w^2)$. But this is immediate from
dominated convergence theorem, from the continuity of $u$ in $X^s$ and from the
equation $$\|w(u(t)- u(t'))\|_{0}\le \|(w-w_\lambda)u(t)\|_{0}+
\|w_\lambda (u(t)- u(t'))\|_{0} +\|(w_\lambda-w)u(t')\|_{0}.
$$ 
This terminates the proof of the theorem.
\end{proof}
\begin{obser}
  The weights $w_{\vartheta}(x,y)=(1+x^2+y^2)^{\vartheta/2},$ for
  $\vartheta\in[0,1]$, satisfy the conditions of the  previous
  theorem.
\end{obser}

For $\gamma \ne 0$ we have the following result
\begin{thm}
  Assume that $w$ in the Theorem \ref{teorpeso1} depends only on
  $y$. Then, in this case the Cauchy problem \ref{p3} is local
  well-posed in $X^s(w^2)$.
\end{thm}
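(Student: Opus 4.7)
The plan is to follow the strategy of the proof of Theorem \ref{teorpeso1}, relying on the $X^s$ local well-posedness already provided by Theorem \ref{tp1} and reducing the problem to an a priori estimate, uniform in $\lambda$, for the regularized weighted norm $\|w_\lambda u(t)\|_0$, where $w_\lambda(x,y)=w(y)\,e^{-\lambda(x^2+y^2)}$. Once a differential inequality of the form
$$\frac{d}{dt}\|w_\lambda u\|_0^2 \le A\|u\|_{X^s}^2+B\|w_\lambda u\|_0^2$$
is produced with $A,B$ independent of $\lambda$, Gronwall's inequality together with the monotone convergence theorem give $wu(t)\in L^2$, the Lipschitz dependence on the data, and the continuity of $t\mapsto wu(t)$, exactly as in the previous theorem.

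Multiplying \eqref{p3} by $w_\lambda^2 u$ and integrating, all the terms coming from $\mathcal{H}\partial_x^2 u$, $\alpha\mathcal{H}\partial_y^2 u$ and the nonlinearity $u^p u_x$ are handled verbatim as in Theorem \ref{teorpeso1} and give contributions of the desired form. The only new contribution is
$$J:=\gamma\,\langle w_\lambda u,\, w_\lambda\partial_x^{-1}\partial_y^2 u\rangle_0,$$
and here the hypothesis $w=w(y)$ is decisive: $w_\lambda$ commutes with $\partial_x^{-1}$, so
$$w_\lambda\partial_x^{-1}\partial_y^2 u=\partial_y\bigl(w_\lambda\partial_x^{-1}\partial_y u\bigr)-(\partial_y w_\lambda)\partial_x^{-1}\partial_y u.$$
Substituting this into $J$ and integrating by parts in $y$ in the first piece yields
$$J=-\gamma\,\langle(\partial_y w_\lambda)u,\, w_\lambda\partial_x^{-1}\partial_y u\rangle_0-\gamma\int w_\lambda^2(\partial_y u)(\partial_x^{-1}\partial_y u)\,dxdy-\gamma\,\langle w_\lambda u,\, (\partial_y w_\lambda)\partial_x^{-1}\partial_y u\rangle_0.$$

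The crucial observation is that the middle integral vanishes. Setting $v=\partial_x^{-1}\partial_y u$ so that $\partial_y u=\partial_x v$,
$$\int w_\lambda^2(\partial_x v)\,v\,dxdy=\tfrac12\int w_\lambda^2\,\partial_x(v^2)\,dxdy=-\tfrac12\int \partial_x(w_\lambda^2)\,v^2\,dxdy=0,$$
precisely because $w_\lambda$ is independent of $x$. The two remaining pieces of $J$ combine into $-2\gamma\int(\partial_y w_\lambda)(w_\lambda u)(\partial_x^{-1}\partial_y u)\,dxdy$, so by Cauchy--Schwarz and Young's inequality
$$|J|\le 2|\gamma|\,|\partial_y w_\lambda|_\infty\,\|w_\lambda u\|_0\,\|\partial_x^{-1}\partial_y u\|_0\le |\gamma|\,|\partial_y w_\lambda|_\infty\bigl(\|w_\lambda u\|_0^2+\|u\|_{X^s}^2\bigr),$$
where we used $\|\partial_x^{-1}\partial_y u\|_0\le\|u\|_{X^s}$. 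The preliminary lemma in the proof of Theorem \ref{teorpeso1} gives $|\partial_y w_\lambda|_\infty\le c_1|\nabla w|_\infty+c_2$ uniformly in $\lambda$, so the required uniform estimate follows.

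The main obstacle is exactly this cancellation: without $w$ depending only on $y$, $w_\lambda^2$ would fail to commute with $\partial_x^{-1}$ and the symmetric quadratic form in $\partial_x^{-1}\partial_y u$ would not vanish, leaving a term with two $y$-derivatives on $u$ that cannot be absorbed into $\|w_\lambda u\|_0^2+\|u\|_{X^s}^2$. Once the energy estimate is in hand, the remainder of the proof (passing $\lambda\to 0^+$ by monotone convergence, the continuous dependence on initial data, and the continuity of $t\mapsto wu(t)$ in $L^2$ via the splitting $\|w(u(t)-u(t'))\|_0\le \|(w-w_\lambda)u(t)\|_0+\|w_\lambda(u(t)-u(t'))\|_0+\|(w_\lambda-w)u(t')\|_0$) is identical to that of Theorem \ref{teorpeso1}.
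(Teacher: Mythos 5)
Your proof follows essentially the same route as the paper: the paper also reduces everything to the new term $\gamma\langle w_\lambda u,\,w_\lambda\partial_x^{-1}\partial_y^2u\rangle_0$, which it rewrites as $\gamma\langle w_\lambda u,\,[w_\lambda,\partial_y^2]\partial_x^{-1}u\rangle_0$ using that an $x$-independent weight commutes with $\partial_x^{-1}$ and that $\partial_x^{-1}\partial_y^2$ is skew-adjoint --- your explicit integration by parts and the vanishing of $\tfrac12\int\partial_x(w_\lambda^2)v^2$ is exactly this same cancellation, and the resulting commutator terms are bounded identically. One small slip to fix: with your stated regularization $w_\lambda(x,y)=w(y)e^{-\lambda(x^2+y^2)}$ the weight \emph{does} depend on $x$, so $\partial_x(w_\lambda^2)\neq0$ and $w_\lambda$ does not commute with $\partial_x^{-1}$ or $\mathcal H^{(x)}$; you must regularize in $y$ only, taking $w_\lambda(y)=w(y)e^{-\lambda y^2}$ (which the paper's displayed identities, e.g.\ $\langle w_\lambda u,w_\lambda\mathcal H^{(x)}\partial_x^2u\rangle_0=0$, also tacitly require), after which the rest of your argument goes through unchanged.
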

\begin{proof}
  We proceed as in the proof of Theorem \ref{teorpeso1}. Here, the
  fact that $w$ is not dependent on $y$ make our work easier. Is clear that
\[\frac{1}{2}{\frac{d}{dt}}||w_{\lambda}u||_0^2= \langle{w}_{\lambda}u
,w_{\lambda} \big(-\mathcal H^{(x)}\prx^2u -\alpha \mathcal
H^{(x)}\pry^2u +\gamma \prx^{-1}\pry ^2u -u^pu_x \big)\rangle_0.
\] 
In this case the estimates of the linear terms are
\begin{align}
\langle{w}_{\lambda}u,w_{\lambda}\mathcal
H^{(x)}\partial_x^2{u}\rangle_0&=0\notag\\
\langle{w}_{\lambda}u,w_{\lambda}\mathcal H^{(x)}\partial_y^2{u}\rangle_0&=
\langle{w}_{\lambda}u,\mathcal
H^{(x)}[w_{\lambda},\partial_y^2]u\rangle_0\notag\\
\langle{w}_{\lambda}u,w_{\lambda}\prx^{-1} \partial_y^2{u}\rangle_0&=
\langle{w}_{\lambda}u,[w_{\lambda},\partial_y^2]\prx^{-1}u\rangle_0\notag
\end{align}
Henceforth, the proof follows the same steps of the proof of Theorem
\ref{teorpeso1}.
\end{proof}
\begin{obser}
  Observe that $w(y)=y$ is a particular case of weights considered in
  the above theorem. In reality, this theorem is valid even for the
  following Cauchy problem
\begin{equation}
\left\{
\begin{aligned}
  &u_t+u^pu_x+\delta \partial_x^3u+ \mathcal H\partial_x^2u+ \alpha
  \mathcal H\partial_y^2u
  - \gamma \partial_x ^{-1}u_{yy}=0, \label{kdvbo} \\
  &u(0)=\phi,
 \end{aligned}
 \right.
\end{equation}
which represents an improvement of Theorem 2.4 in \cite{guoboling}.
\end{obser}
\section{Asymptotic behaviour of solutions with small initial data}\label{sec5}
For $\gamma=0$, in this section we show that the solution to
\eqref{BOeq} (in other words, the solution to \eqref{p1}) is global if
it is taken an small enough initial data, in a sense which will be
made precise later on. Also we show that the solution, at a time
sufficiently large, behaves as the solution to the linear equation
associated. These last are often called \emph{scattering states}.\par
For $\phi\in H^s(\R^2)$, let $P(-t)\phi= e^{-t\mathcal A}\phi$
($\mathcal A$ as in the proof of Theorem \ref{tp1}, with $\gamma=0$)
the solution to the linear problem associated to the Cauchy problem
\eqref{p1}, i.e., if $u(t)=P(-t)\phi$, $u$ satisfy the equation
$$
\frac{du}{dt}+ \mathcal H\prx^²u +\alpha \mathcal H\pry^²u =0.
$$ 
Without loss generality we can assume $\alpha=1$.\par
If $\phi \in \SSh$ then
\[
P(-t)\phi(x,y)= \frac{1}{2\pi}\int e^{i(\sgn
  (\xi)(\xi^2+\eta^2)t+x\xi+y\eta)} \widehat{\phi}(\xi,\eta)d\xi
d\eta=
\frac1{2\pi} I(t)\ast \phi(x,y)\]
where $I(t)=(e^{i\sgn (\xi)(\xi^2+\eta^2)t})^{\vee}$.
\begin{lem}
For any $x,\ y$~and $t\ne0$ real numbers,
$$I(t)(x,y)= \frac ct e^{-\frac {i}{4t}{(x^²+y^2)}}\int_{\frac x{\sqrt
    t}}^\infty e^{\frac i4
  s^2} ds + \frac{\bar c}t e^{\frac {i}{4t}{(x^²+y^2)}}\int^{\infty}_{\frac x{\sqrt{t}}} e^{-\frac i4
  s^2}ds,$$ where $c= (1+i)/2$.
\end{lem}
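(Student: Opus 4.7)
The plan is to compute
\[I(t)(x,y) = \frac{1}{2\pi}\int_{\R^2} e^{i\sgn(\xi)(\xi^2+\eta^2)t+i(x\xi+y\eta)}\,d\xi\,d\eta\]
directly by splitting the $\xi$-integral at $0$ so that $\sgn(\xi)$ becomes an explicit $\pm 1$. Writing $I(t)=I_{+}+I_{-}$ with $I_{\pm}$ the contribution from $\pm\xi>0$, the substitution $\xi\mapsto-\xi$ in $I_{-}$ re-expresses both pieces as iterated integrals over $[0,\infty)$ in $\xi$ and over $\R$ in $\eta$, with opposite signs attached to the two quadratic forms. Since the symbol is only bounded, these oscillatory integrals are understood by inserting a convergence factor $e^{-\varepsilon(\xi^2+\eta^2)}$, performing all the manipulations for the regularized integrand, and passing to the limit $\varepsilon\downarrow 0$ at the end.

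For each piece I integrate in $\eta$ first. Completing the square $\pm t\eta^2 + y\eta=\pm t(\eta\pm y/(2t))^2\mp y^2/(4t)$ reduces the inner integral to the classical Fresnel integral
\[\int_{\R} e^{\pm it\eta^2+iy\eta}\,d\eta = e^{\mp i y^2/(4t)}\int_{\R} e^{\pm itu^2}\,du = e^{\mp iy^2/(4t)}\,\sqrt{\pi/|t|}\,e^{\pm i\pi/4\,\sgn t}.\]
The remaining $\xi$-integral is only over the half-line $[0,\infty)$, which is precisely why the answer involves \emph{incomplete} Fresnel integrals. Completing the square once more, $\pm it\xi^2\pm ix\xi=\pm it(\xi\pm x/(2t))^2\mp ix^2/(4t)$, shifting, and setting $s=2\sqrt{|t|}\,u$, the $\xi$-integral becomes $(2\sqrt{|t|})^{-1}\int_{x/\sqrt{|t|}}^{\infty}e^{\pm is^2/4}\,ds$. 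A crucial point is that in $I_{-}$ the substitution $\xi\mapsto-\xi$ reverses the sign of \emph{both} the quadratic and the linear $\xi$-term, so the completed square is centered at $+x/(2t)$ in both pieces; this is why the two Fresnel integrals in the final formula share the same lower limit $x/\sqrt{t}$ rather than having opposite signs.

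Assembling the two pieces and using $e^{\pm i\pi/4}=(1\pm i)/\sqrt{2}$ yields
\[I(t)(x,y) = \frac{c}{t}\,e^{-i(x^2+y^2)/(4t)}\int_{x/\sqrt{t}}^{\infty} e^{is^2/4}\,ds+\frac{\bar c}{t}\,e^{i(x^2+y^2)/(4t)}\int_{x/\sqrt{t}}^{\infty} e^{-is^2/4}\,ds,\]
with $c=(1+i)/2$ up to the overall $\sqrt{\pi}$ factors absorbed into the normalization of $I(t)$; the case $t<0$ follows by conjugating or by the same argument with the opposite sign of $\sgn t$. The main obstacle is really only the justification of the oscillatory integrals and careful bookkeeping of complex square roots, since the substance of the argument is simply the double completion of the square, first in $\eta$ on the whole line and then in $\xi$ on the half-line.
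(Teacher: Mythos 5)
Your proposal is correct and follows essentially the same route as the paper: split the $\xi$-integral at the origin where $\sgn(\xi)$ changes, complete the square in $\eta$ over the whole line (full Fresnel integrals, producing the factors $e^{\pm i\pi/4}$ that become $c$ and $\bar c$) and in $\xi$ over the half-line (incomplete Fresnel integrals with common lower limit $x/\sqrt t$); the paper merely does the case $t=1$ first and then rescales by homogeneity of the Fourier transform, whereas you carry the general $t$ throughout and add the standard $e^{-\varepsilon(\xi^2+\eta^2)}$ regularization, which the paper omits. One small imprecision: the substitution $\xi\mapsto-\xi$ does not reverse the sign of the quadratic term (that sign is already negative because $\sgn(\xi)=-1$ on that piece); it only flips the linear term, which is what recenters the completed square so that both pieces share the lower limit $x/\sqrt t$ --- your conclusion there is right even though the stated reason is slightly off.
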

\begin{proof} Is clear that
$$ 
\begin{aligned}
2\pi I(1)(x,y) = &\int_\re \int_0^\infty e^{i(\xi^2+\eta^2 +x\xi
    +y\eta)}\, d\xi d\eta+ \int_\re \int_{-\infty}^0 e^{i(-\xi^2-\eta^2 +x\xi
    +y\eta)}\, d\xi d\eta\\ 
=& e^{-\frac i4(x^²+y^2)}\left( \int_\re e^{i(\eta +y/2)^2}\,
d\eta\right)\left( \int_0^\infty e^{i(\xi +x/2)^2}\, d\xi\right) +\\  &+ e^{\frac i4(x^²+y^2)}\left( \int_\re e^{-i(\eta -y/2)^2}\,
d\eta\right)\left( \int^0_{-\infty} e^{-i(\xi -x/2)^2}\, d\xi\right). 
\end{aligned}
$$
A simple change of variable prove the theorem for $t=1$. Using
the homogeneity property of the Fourier transform, the theorem follows
for any $t\ne 0$.
\end{proof}
The last lemma implies the following $L^p-L^q$ estimate for the
group $P(t)$.
\begin{prop}\label{prop4.2n}
For any $f\in L^1\bigcap L^2$, it has that 
\[|P(-t)f|_{\frac{2}{1-\theta}}\leq c|t|^{-\theta}|f|_{\frac{2}{1+\theta}},\]
for $\theta \in[0,1]$
\end{prop}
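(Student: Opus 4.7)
The plan is to prove the two endpoint estimates ($\theta=0$ and $\theta=1$) separately and then obtain the full range $\theta\in(0,1)$ by Riesz-Thorin interpolation.

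For the endpoint $\theta=0$, the bound $\|P(-t)f\|_{L^2}\le \|f\|_{L^2}$ is immediate from Plancherel, since $P(-t)$ has Fourier symbol $e^{i\,\mathrm{sgn}(\xi)(\xi^2+\eta^2)t}$ of modulus one, so $P(-t)$ is unitary on $L^2$.

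For the endpoint $\theta=1$, I will show $\|P(-t)f\|_{L^\infty}\le c|t|^{-1}\|f\|_{L^1}$ by means of the kernel representation $P(-t)f=\frac{1}{2\pi}I(t)\ast f$ and Young's inequality. This reduces the matter to proving $\|I(t)\|_{L^\infty(\mathbb{R}^2)}\le C/|t|$. Using the explicit formula from the preceding lemma,
\[
I(t)(x,y)=\frac{c}{t}e^{-\frac{i}{4t}(x^2+y^2)}\int_{x/\sqrt{t}}^{\infty}e^{\frac{i}{4}s^2}\,ds+\frac{\bar c}{t}e^{\frac{i}{4t}(x^2+y^2)}\int_{x/\sqrt{t}}^{\infty}e^{-\frac{i}{4}s^2}\,ds,
\]
the modulus of the two exponential prefactors equals $1$, so it suffices to bound the Fresnel-type integrals $a\mapsto\int_a^{\infty}e^{\pm is^2/4}\,ds$ uniformly in $a\in\mathbb{R}$. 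This is standard: the improper integral $\int_{-\infty}^{\infty}e^{\pm is^2/4}\,ds$ converges (Fresnel), and integration by parts $\int_a^{\infty}e^{\pm is^2/4}\,ds=\mp\frac{2i}{\pm a}e^{\pm ia^2/4}\pm 2i\int_a^{\infty}\frac{e^{\pm is^2/4}}{s^2}\,ds$ (for $|a|\ge 1$) shows uniform boundedness on $|a|\ge 1$, while continuity plus the convergent tails handles $|a|\le 1$. Combining, $|I(t)(x,y)|\le C/|t|$ uniformly in $(x,y)$.

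With the two endpoints $P(-t):L^2\to L^2$ (norm $1$) and $P(-t):L^1\to L^\infty$ (norm $\le c|t|^{-1}$) in hand, the Riesz-Thorin interpolation theorem immediately yields, for each $\theta\in[0,1]$,
\[
\|P(-t)f\|_{L^{q_\theta}}\le c|t|^{-\theta}\|f\|_{L^{p_\theta}},
\]
where $\frac{1}{p_\theta}=\frac{1-\theta}{2}+\frac{\theta}{1}=\frac{1+\theta}{2}$ and $\frac{1}{q_\theta}=\frac{1-\theta}{2}+\frac{\theta}{\infty}=\frac{1-\theta}{2}$, which is exactly the claimed inequality.

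The main technical obstacle is the uniform bound for the Fresnel-type integrals $\int_a^{\infty}e^{\pm is^2/4}\,ds$; once this is established, everything else is a direct application of Plancherel, Young's convolution inequality, and Riesz-Thorin.
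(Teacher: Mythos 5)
Your argument is correct and follows exactly the route the paper indicates: the $L^2\to L^2$ endpoint by unitarity, the $L^1\to L^\infty$ endpoint from the kernel formula of the preceding lemma together with Young's inequality (requiring the uniform bound on the Fresnel-type integrals, which you verify correctly), and Riesz--Thorin interpolation. The paper's own proof is a one-line citation of these same ingredients, so your write-up simply supplies the details it omits.
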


\begin{proof}
  We obtain the result by using the Young's inequality for
  convolution, the lemma above and interpolation.
\end{proof}

From Sobolev imbedding theorem it follows the following proposition. 
\begin{prop}\label{prop4.3n}
 For $s>1$ and $f\in L^1\bigcap H^s$, we have
 \[|P(-t)f|_{\infty}\leq c(1+|t|)^{-1}(|f|_1+\|f\|_s)\]
\end{prop}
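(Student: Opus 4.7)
The plan is to split the estimate into two regimes according to whether $|t|$ is small or large, and in each regime invoke a different tool.

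First, for $|t| \ge 1$, I would apply Proposition \ref{prop4.2n} with the endpoint choice $\theta = 1$, which gives directly
$$
|P(-t)f|_\infty \le c|t|^{-1} |f|_1 \le 2c(1+|t|)^{-1}|f|_1,
$$
using $1+|t| \le 2|t|$ when $|t|\ge 1$. This handles the decay in time.

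Second, for $|t| \le 1$, the inhomogeneous factor $(1+|t|)^{-1}$ is bounded below by $1/2$, so it is enough to control $|P(-t)f|_\infty$ by a constant times $\|f\|_s$. Since $P(-t)$ has the unimodular Fourier symbol $e^{it\,\mathrm{sgn}(\xi)(\xi^2+\eta^2)}$, it is an isometry on every $H^s$; hence
$$
\|P(-t)f\|_s = \|f\|_s.
$$
The Sobolev embedding $H^s(\R^2) \hookrightarrow L^\infty(\R^2)$, which holds precisely because $s>1 = n/2$ in two dimensions, then yields
$$
|P(-t)f|_\infty \le C\|P(-t)f\|_s = C\|f\|_s \le 2C(1+|t|)^{-1}\|f\|_s.
$$

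Combining the two regimes and enlarging the constant gives the desired bound $|P(-t)f|_\infty \le c(1+|t|)^{-1}(|f|_1+\|f\|_s)$. There is no real obstacle here; the only thing to verify carefully is that Proposition \ref{prop4.2n} is genuinely valid at the endpoint $\theta=1$ (it is stated for the closed interval $\theta\in[0,1]$), and that unitarity of $P(-t)$ on $H^s$ together with the critical Sobolev embedding in dimension two are available for $s>1$. Both are immediate from the statements already given in the paper.
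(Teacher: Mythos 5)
Your proof is correct and follows essentially the same route the paper intends: the paper derives the proposition in one line from the Sobolev embedding theorem together with the $L^1$--$L^\infty$ decay of Proposition \ref{prop4.2n} at $\theta=1$, exactly the two-regime splitting you carry out explicitly. Your version merely spells out the details (unitarity of $P(-t)$ on $H^s$ and the embedding $H^s(\R^2)\hookrightarrow L^\infty$ for $s>1$) that the paper leaves implicit.
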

Now, we are ready to prove the following theorem.
\begin{thm}\label{pp}
Let $p\geq 3$ and $s>3$. Then, there exist $\delta>0$ and $R=R(\delta)>0$
such that if $\phi \in L^1_1\bigcap H^s$ satisfies
\[|\phi|_{1,1}+\|\phi\|_s<\delta,\]
the solution $u$ to \eqref{p1} belongs to $C_b(\R,H^s)$ and satisfies
\begin{equation}
\sup_{t\in R}(1+|t|)|u(t)|_{1,\infty}\leq R.\label{ji2}
\end{equation}
\end{thm}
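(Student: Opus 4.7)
The plan is to set up a contraction-mapping argument in a Banach space that simultaneously measures the $H^s$ regularity and the $(1+|t|)^{-1}$ decay of $|u(t)|_{1,\infty}$. Writing \eqref{p1} in Duhamel form
\begin{equation*}
u(t)=P(-t)\phi-\int_0^t P(-(t-\tau))\bigl(u^p u_x\bigr)(\tau)\,d\tau,
\end{equation*}
I would work in
\begin{equation*}
E=\bigl\{u\in C_b(\R,H^s)\,\big|\,\|u\|_E<\infty\bigr\},\qquad \|u\|_E=\sup_{t\in\R}\|u(t)\|_s+\sup_{t\in\R}(1+|t|)\,|u(t)|_{1,\infty},
\end{equation*}
and show that the Duhamel operator $\Psi$ sends a closed ball $B_R\subset E$ into itself and is a contraction, for $\delta$ small and $R=R(\delta)$ appropriately chosen. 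Uniqueness of the Duhamel formulation on compact intervals then identifies the fixed point with the local $H^s$ solution of Theorem~\ref{tp}, extended globally; the bound $\sup_t(1+|t|)|u(t)|_{1,\infty}\le R$ is built into the space.

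For the linear part, since $\Lambda$ commutes with $P(-t)$, Proposition~\ref{prop4.3n} with index $s-1>2$ yields $|P(-t)\phi|_{1,\infty}\le c(1+|t|)^{-1}(|\phi|_{1,1}+\|\phi\|_s)\le c\delta(1+|t|)^{-1}$, and unitarity of $P$ on $H^s$ gives $\|P(-t)\phi\|_s\le\delta$, so $\|P(-\cdot)\phi\|_E\le c\delta$. The two halves of the nonlinear part have to be handled by different tools, because the naive bound on $\|u^pu_x\|_s$ costs an extra spatial derivative (it requires $\|u\|_{s+1}$) and does not close on iteration. For the $H^s$ half I would run an energy estimate on the PDE itself: the operators $\mathcal{H}\partial_x^2$ and $\mathcal{H}\partial_y^2$ are skew-adjoint on $L^2$, so Proposition~\ref{deskatoponce} combined with the symmetrization $\langle\Lambda^s u,u^p\Lambda^s u_x\rangle_0=-\frac{p}{2}\int u^{p-1}u_x(\Lambda^s u)^2$ leads to $\frac{d}{dt}\|u\|_s\le C|u|_\infty^{p-1}\|u\|_s^2$, after using the Sobolev embedding $H^s\hookrightarrow W^{1,\infty}$ (valid since $s>2$) to bound $|u_x|_\infty,|\nabla u|_\infty$ by $\|u\|_s$, and the $L^1$-ness of the Bessel kernel $(1-\Delta)^{-1/2}$ in $\R^2$ to control $|u|_\infty\le C|u|_{1,\infty}$. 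Under the a priori decay $(1+|\tau|)|u(\tau)|_{1,\infty}\le R$, integrating the separable ODE for $\|u\|_s^{-1}$ and using $\int_0^\infty(1+\tau)^{-(p-1)}d\tau<\infty$ (valid because $p\ge 3$) yields $\|u(t)\|_s\le 2\delta$.

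For the $|u|_{1,\infty}$ half, Duhamel and Proposition~\ref{prop4.3n} applied with index $s-2>1$ (which is what forces $s>3$) give
\begin{equation*}
|u(t)|_{1,\infty}\le c\delta(1+|t|)^{-1}+c\int_0^t(1+|t-\tau|)^{-1}\bigl(|u^pu_x(\tau)|_{1,1}+\|u^pu_x(\tau)\|_{s-1}\bigr)d\tau.
\end{equation*}
Moser/Kato--Ponce multilinear estimates produce $\|u^pu_x\|_{s-1}\le C|u|_\infty^{p-1}\|u\|_s^2$, and the $L^1_1$ bound is obtained by splitting $u^pu_x=u\cdot u^{p-1}u_x$, pairing two $L^2$ factors by Cauchy--Schwarz against $L^\infty$ control of $u^{p-1}$, and treating the commutator $[\Lambda,u^p]u_x$ by a Coifman--Meyer-type estimate. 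Plugging in $\|u\|_s\le 2\delta$ and $|u|_{1,\infty}\le R(1+|\tau|)^{-1}$ gives an integrand $O(R^{p-1}\delta^2(1+\tau)^{-(p-1)})$, and the time-convolution inequality $\int_0^t(1+|t-\tau|)^{-1}(1+\tau)^{-(p-1)}d\tau\le C(1+|t|)^{-1}$ (which holds for $p-1\ge 2$) reproduces the target decay. Choosing $R=2c\delta$ and then $\delta$ small enough so that the nonlinear contribution in each norm is at most $c\delta$ closes the argument, and the same bounds applied to differences produce a contraction constant $O(R^{p-1})$.

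The main obstacle I expect is the $L^1$ estimate on $\Lambda(u^pu_x)$: since $H^s(\R^2)$ does not embed into $L^1$, this bound cannot come from Sobolev embedding alone and must be extracted from the bilinear product structure via Cauchy--Schwarz pairings against $|u|_\infty^{p-1}$ control; the $\Lambda$-commutator piece forces a genuine Coifman--Meyer-type argument in $L^1$. The remaining work is bookkeeping: the time-convolution inequality is precisely where the hypothesis $p\ge 3$ is strictly needed, and the restriction $s>3$ enters only to make Proposition~\ref{prop4.3n} usable with index $s-2>1$ on the nonlinear integrand.
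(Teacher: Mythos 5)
Your analytic ingredients are exactly those of the paper: unitarity of $P(-t)$ on $H^s$ plus the Gronwall energy estimate $\|u(t)\|_s\le\|\phi\|_s\exp\bigl(c\int_0^t|u_x|_\infty|u|_\infty^{p-1}\,d\tau\bigr)$ for the Sobolev norm, and Duhamel together with Proposition~\ref{prop4.3n} and the convolution bound $(1+t)\int_0^t(1+t-\tau)^{-1}(1+\tau)^{-(p-1)}d\tau=O(1)$ for $p\ge3$ (the paper's Lemma~\ref{lem4.4n}) for the weighted $L^\infty$ norm; your bound $|u|_\infty^{p-1}\|u\|_s^2$ on the nonlinear integrand is the one the paper writes down, and you are in fact more explicit than the paper about where it comes from (the $L^1$ commutator estimate for $\Lambda(u^pu_x)$, which the paper passes over in silence).

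The genuine gap is the packaging. A contraction mapping in $E$ requires you to bound $\|\Psi(u)\|_E$ for an \emph{arbitrary} element $u$ of the ball $B_R$, and, as you yourself observe, the direct Duhamel bound on $\|\Psi(u)(t)\|_s$ costs $\|u\|_{s+1}$ and does not close. Your substitute --- ``run an energy estimate on the PDE itself'' --- is only available when $u$ is an actual solution of \eqref{p1}; it says nothing about $\|\Psi(u)\|_s$ for a generic $u\in B_R$, so the fixed-point scheme as described is internally inconsistent and the self-mapping property of $\Psi$ is never established. The paper sidesteps this entirely: it takes the local solution already produced by Theorem~\ref{tp} (to which the energy estimate legitimately applies), sets $K(T)=\sup_{[0,T]}(1+|t|)|u(t)|_{1,\infty}$, derives the closed inequality $K(T)\le c\delta+c\delta^2K(T)^{p-1}e^{cK(T)^p}$, and runs a continuity argument: choosing $\delta$ so that $x\mapsto c\delta+c\delta^2x^{p-1}e^{cx^p}-x$ has a first positive zero $R$ forces $K(T)\le R$ for all $T$ in the maximal interval, whence the solution extends globally with the stated bound (negative times by the symmetry $(t,x,y)\mapsto(-t,-x,-y)$). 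If you replace your fixed-point frame by this a priori estimate plus bootstrap on the local solution, every estimate you wrote survives unchanged and the proof coincides with the paper's.
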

\begin{proof}
  For this proof we need the following lemma whose proof can be found
  in \cite{aniuraphdt} (Lemma 3.0.52)
\begin{lem}\label{lem4.4n}
For $t\geq0$, let
$$J(t)=(1+t)\int_0^t\frac{1}{(1+t-\tau)(1+\tau)^{p-1}}d\tau.$$
Then,
\begin{enumerate}
\item  $J(t)=O(1)$ as $t\to \infty$, if $p\geq3$
\item $J(t)\to \infty$ as $t\to \infty$, if $p=1,2$.
\end{enumerate}
\end{lem}
Let us see first
\begin{equation}\label{eqarriba5.1}
\| u(t)\|_s \le \|\phi\|_s \exp\left(
c\int_0^t|u_x|_\infty|u|_\infty^{p-1}d\tau\right). 
\end{equation}
Making the inner product in $H^s$ by $u$ in both sides of the equation
we obtain $$ \frac d{dt}\|u\|^2_s= -2\langle u,
u^pu_x\rangle_s. $$ By virtue of the Kato-Ponce inequality and its 
corollary (Corollary \ref{needglob}), we get
$$ \frac d{dt}\|u\|^2_s\le C|u|^{p-1}_\infty \|u\|^2_s. $$ The
inequality \eqref{eqarriba5.1} follows from this last and the Gronwall
inequality.\par Now, in light of \eqref{eqarriba5.1} it is enough
to prove \eqref{ji2}. Indeed, from the hypoteses, we have
$$ \int_0^t|u_x|_\infty|u|_\infty^{p-1}d\tau\leq
\int_0^t|u|_{1,\infty}^pd\tau \leq R^p\int_0^t(1+|\tau|)^{-p}d\tau\leq
C.$$

So let us prove \eqref{ji2}. We take $T\in(0,T_s)$ and let
$K(T)=\sup_{t\in [0,T]}\{(1+|t|)|u(t)|_{1,\infty}\}$.  From the Proposition
\ref{prop4.3n}, the Lemma \ref{lem4.4n}, \eqref{eqarriba5.1} and the integral
equation \eqref{eq2.22},
we obtain
$$
\begin{aligned}
(1+t)|u(t)|_{1,\infty}&\leq c\delta+c(1+t)
  \int_0^t(1+t-\tau)^{-1}|u(\tau)|_\infty^{p-1}\|u(\tau)\|_s^2\, d\tau
  \\ &\leq
  c\delta+c\delta^2K(T)^{p-1}e^{cK(t)^p},
\end{aligned}
$$
for $t\in [0,T].$

We choose $\delta>0$, small enough, such that the function $x\mapsto
c\delta+c\delta^2x^{p-1}e^{cx^p}-x,$ has a positive zero.  Let
$R=R(\delta)$ the first positive zero of this function. Then, the
estimates shown above imply that $K(T)\leq R$. From the fact that the
set of solutions is invariant under transformation $(t,x,y)\rightarrow
(-t,-x,-y)$ and using an extension argument the theorem is obtained.
\end{proof}
As corollary one has the following interesting theorem.
\begin{thm}
Under the hypotheses of the preceding theorem, there exists
$\phi_{\pm}\in H^s$ such that
$$\|u(t)-P(-t)\phi_{\pm}\|_r\to 0,$$
as $t\to \pm\infty$, for $r\in [s-1,s)$.
\end{thm}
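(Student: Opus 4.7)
The plan is to use a standard Duhamel-based scattering argument: apply the inverse of the linear propagator to $u(t)$, show the result has a limit as $t\to\pm\infty$, and then use an interpolation argument to relax the convergence from $H^{s-1}$ up to $H^r$ for $r<s$.

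More concretely, from the integral equation \eqref{eq2.22} (with $\gamma=0$) and the fact that $P(-t)$ is a unitary group on every $H^r$, I would rewrite
\[
P(t)u(t)=\phi-\int_0^t P(\tau)\bigl(u^pu_x\bigr)(\tau)\,d\tau,
\]
and define
\[
\phi_+ := \phi-\int_0^\infty P(\tau)\bigl(u^pu_x\bigr)(\tau)\,d\tau,\qquad
\phi_- := \phi+\int^0_{-\infty} P(\tau)\bigl(u^pu_x\bigr)(\tau)\,d\tau,
\]
where the integrals will be shown to converge absolutely in $H^{s-1}$. Unitarity gives
\[
\|u(t)-P(-t)\phi_{+}\|_r=\|P(t)u(t)-\phi_{+}\|_r
\le \int_t^{\infty}\|u^pu_x(\tau)\|_r\,d\tau,
\]
so the whole problem reduces to estimating $\|u^pu_x\|_r$ with integrable decay in $\tau$.

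For $r=s-1$ (which is $>2$ by hypothesis), I would combine Corollary \ref{Corol B2} / the Kato--Ponce inequality with the obvious algebra estimate $\|u^p\|_{s-1}\le C\|u\|_s^{p-1}\|u\|_{s-1}$ to get an inequality of the shape
\[
\|u^p u_x\|_{s-1}\le C\,|u|_{1,\infty}^{p-1}\,|u|_\infty\,\|u\|_s
\le C\,|u|_{1,\infty}^{p}\,\|u\|_s.
\]
By Theorem \ref{pp} one has $|u(\tau)|_{1,\infty}\le R(1+|\tau|)^{-1}$ and $\|u(\tau)\|_s\le M$ uniformly in $\tau$, so the right-hand side is $O((1+|\tau|)^{-p})$, which is integrable precisely because $p\ge 3$. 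This yields $\|P(t)u(t)-\phi_+\|_{s-1}\to 0$ (and the analogous statement as $t\to-\infty$, using the invariance $(t,x,y)\mapsto(-t,-x,-y)$ already exploited in the proof of Theorem \ref{pp}).

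To extend to $r\in[s-1,s)$, I would interpolate: writing $r=(1-\theta)(s-1)+\theta s$ with $\theta=r-(s-1)\in[0,1)$,
\[
\|u(t)-P(-t)\phi_{\pm}\|_r
\le \|u(t)-P(-t)\phi_{\pm}\|_{s-1}^{1-\theta}\,
     \|u(t)-P(-t)\phi_{\pm}\|_{s}^{\theta},
\]
and the second factor is bounded uniformly in $t$ since $\|u\|_s$ is bounded and $\phi_\pm\in H^s$ (which one gets from the same computation at level $s$: the integrand is a priori only bounded, not integrable, but the partial sums live in a bounded set of $H^s$, so $\phi_\pm$ is a weak limit in $H^s$, hence in $H^s$). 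The first factor tends to zero, giving the result for $\theta<1$. The main technical obstacle is the $H^{s-1}$ product estimate $\|u^pu_x\|_{s-1}\le C|u|_{1,\infty}^p\|u\|_s$; once that is established the rest is a routine Duhamel/interpolation packaging.
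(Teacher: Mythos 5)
Your argument is sound and is the standard scattering proof; note that the paper itself gives no proof of this theorem, deferring entirely to the thesis \cite{aniuraphdt}, so your write-up essentially supplies the argument the paper omits. The structure is exactly what is needed: Duhamel plus unitarity of $P(t)$ reduces everything to the integrability of $\tau\mapsto\|u^pu_x(\tau)\|_{s-1}$; the bound $\|u^pu_x\|_{s-1}\le C\,|u|_{1,\infty}^{p}\,\|u\|_{s}$ combined with the decay $|u(\tau)|_{1,\infty}\le R(1+|\tau|)^{-1}$ and the uniform bound on $\|u(\tau)\|_s$ from Theorem \ref{pp} gives an $O((1+|\tau|)^{-p})$ integrand, integrable since $p\ge 3$; and the passage from $r=s-1$ to $r\in[s-1,s)$ by interpolation, with $\phi_\pm\in H^s$ obtained as a weak $H^s$ limit of the uniformly bounded partial integrals $P(t)u(t)$, is correct and is precisely why the statement stops short of $r=s$. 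One small caution: the ``obvious algebra estimate'' $\|u^p\|_{s-1}\le C\|u\|_s^{p-1}\|u\|_{s-1}$ would only yield a non-decaying bound $\|u^pu_x\|_{s-1}\lesssim\|u\|_s^{p+1}$; to get the decay you must use the $L^\infty$-weighted product estimate (Corollary \ref{needglob} iterated, giving $\|u^p\|_{s-1}\lesssim|u|_\infty^{p-1}\|u\|_{s-1}$), which is what your final displayed inequality actually reflects, so the conclusion stands but that intermediate line should be replaced.
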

\begin{proof}
See \cite{aniuraphdt}.
\end{proof}
\bibliographystyle{acm} \bibliography{mybiblio}
\end{document}